\DeclareMathAlphabet{\mathpzc}{OT1}{pzc}{m}{it}
\newcommand{\C}{\boldsymbol{\mathrm{C}}}
\newcommand{\R}{\boldsymbol{\mathrm{R}}}
\newcommand{\F}{\boldsymbol{\mathrm{F}}}
\newcommand{\Z}{\boldsymbol{\mathrm{Z}}}
\newcommand{\St}{\mathbb{S}}
\newcommand{\Spec}{{\rm Spec }}
\newcommand{\Specg}{{\rm Spec }^g}
\newcommand{\e}{\varepsilon}
\newcommand{\Homg}{{\rm Hom}_{gr-alg}}
\newcommand{\Homa}{{\rm Hom}_{alg}}
\newcommand{\Gr}{\mathcal{G}\mathcal{R}}
\newcommand{\nil}{{\rm nil}}
\newcommand{\ch}{{\rm char}}
\newcommand{\kf}{\boldsymbol{\rm k}}
\renewcommand{\_}{\rule{1.2ex}{.5pt}\,}
\newtheorem{theorem}{Theorem}[section]
\newtheorem{lemma}[theorem]{Lemma}
\newtheorem{proposition}[theorem]{Proposition}
\newtheorem{corollary}[theorem]{Corollary}
\theoremstyle{definition}
\newtheorem{definition}[theorem]{Definition}
\newtheorem{example}[theorem]{Example}
\newtheorem{rk}[theorem]{Remark}
\title{Graded Group schemes and graded group varieties}
\author{Camil I. Aponte Rom\'{a}n} 
\email{camili@gmail.com}
\urladdr{www.math.washington.edu/\textasciitilde camili} 
\begin{document}
\begin{abstract}
We define graded group schemes and graded group varieties and develop their theory. Graded group schemes are the graded analogue of affine group schemes and are in correspondence with graded Hopf algebra. Graded group varieties take the place of infinitesimal group schemes. We generalize the result that connected graded bialgebras are graded Hopf algebra to our setting and we describe the algebra structure of graded group varieties. We relate these new objects to the classical ones providing a new and broader framework for the study of graded Hopf algebras and affine group schemes.
\end{abstract}
\maketitle

\tableofcontents

\section{Introduction}

Group schemes and in particular infinitesimal group schemes are very well studied algebraic and geometric objects. Group schemes are a generalization of algebraic groups, and infinitesimal group schemes are an important class of group schemes as they take the role of Lie groups. The main purpose of this paper is to construct the theory of group schemes in the graded realm, that is, when the coordinate rings are graded Hopf algebras and the category is that of graded commutative algebras.  This innocent idea of adding the grading can pose some difficulties, especially when we wish to keep track of the grading in each computation and in each relation we establish. There are also some subtleties arising from the fact not all graded Hopf algebras come from ungraded Hopf algebras. The concepts of graded group schemes and graded group varieties will take the place of group schemes and infinitesimal group schemes, respectively. Defining and studying these two concepts will be the main focus of this paper.

In section \ref{def} we define graded group schemes and set up the notation. We also provide some examples.  We then move to section \ref{grvariety} where we introduce graded group varieties. We give a generalization of the result that connected graded bialgebras are graded Hopf algebra; in the context of graded group varieties. Our result is the algebraic analogue of a well known geometric result regarding projective group schemes. 

Connected graded Hopf algebras are reasonably behaved and understood. It turns out that the coordinate rings of graded group schemes are not necessarily connected, that is, their degree zero part may not be the ground field. In order to circumvent this issue, we construct the algebraic connectivization of a graded group scheme as a connected (in the algebraic sense) graded group scheme, that is, one whose coordinate ring is connected. We establish a relation between a graded group variety and its algebraic connectivization. This allows us to proof one of our main results, which is the classification of graded group varieties.   

Finally in section \ref{secgrcon} we define the graded connected and graded separable components of a graded group schemes. We show that connectivity of graded group schemes relate to the connectivity of the graded spectrum, which is also defined. In section \ref{secclas} we use the constructions from section \ref{secgrcon} in order to give a decomposition of finite graded group schemes in terms of connected and \'{e}tale components. We also give examples of such decompositions.

\section{Definitions and examples}\label{def}

\begin{definition}
Let $\Gr$ be the category of (finitely generated)  graded commutative $\kf$-al\-ge\-bras, where $\kf$ is a field. A
representable functor $G: \Gr \to (groups)$ is called an \textit{affine graded group
scheme}.  We will call them \textit{gr-group schemes} for short. The graded algebra representing $G$ is denoted by $\kf[G]$ and is called the \textit{coordinate algebra} of $G$. We will drop the word affine from now on, as all our gr-schemes will be assumed to be affine.
\end{definition}

As in the ungraded case, by Yoneda's Lemma there is an equivalence of ca\-te\-go\-ries between gr-group schemes and
graded commutative Hopf algebras.

\begin{rk}
A graded algebra is graded commutative if, for $a, b$ homogeneous elements in $A$, we have that $ab = (-1)^{|a||b|}ba$. Note that this way the multiplication map from $m_A: A \otimes A \to A$ is a graded algebra map where the multiplication of $A \otimes A$ is given by $\xymatrixcolsep{4pc}\xymatrix{(A \otimes A) \otimes (A \otimes A) \ar[r]^{A \otimes \tau \otimes A} & (A \otimes A) \otimes (A \otimes A) \ar[r]^-{m_A \otimes m_A} & A \otimes A }$, where $\tau: A \otimes A \to A \otimes A$ is given by $\tau(a \otimes b)  =(-1)^{|a||b|} b \otimes a$. 
\end{rk}

\begin{definition}
We denote $\kf[x_1, \ldots, x_n]^{gr}$ to be the \emph{graded polynomial ring} over $\kf$ in $n$-variable, where $x_ix_j = (-1)^{|x_i||x_j|}x_jx_i$. Note that if $\ch(\kf) \neq 2$, then $x_i^2 = 0$ if $|x_i|$ is odd. 
\end{definition}

\begin{rk} When $\ch(\kf) = 2$, $\kf[x_1, \ldots, x_n]^{gr}$ is just the (ungraded) polynomial ring where the $x_i$'s are graded. For $\ch(\kf) \neq 2$ a standard notation for $\kf[x_1, \ldots, x_n]^{gr}$ is $\kf[y_1, \ldots, y_m] \otimes \Lambda(z_1, \ldots, z_k)$, where the $y_i$'s are evenly graded and the $z_i$'s are oddly graded. The $y_i$'s are in the polynomial part (in the traditional sense) and the $z_i$'s are in the exterior part. We choose not to use this standard notation for the following reason: the exterior part of the graded polynomial rings that arise in the ungraded setting is usually ignored. For example, when studying the cohomology of (ungraded) Hopf algebras or group schemes, people are usually interested in working with a strictly commutative ring. This not what we want to do in our setting. Our objects are graded to begin with, and the odd degree part is as important as the even degree part. With the notation as defined above, we want to convey to the reader that each element in $\kf[x_1, \ldots, x_n]^{gr}$ plays an important role.  We think of $\kf[x_1, \ldots, x_n]^{gr}$ as the graded version of the polynomial ring and the fact that $x_i^2$ may be zero is just a consequence of the graded commutativity.  
\end{rk}

\begin{definition}
 We say that a gr-group scheme $G$ is a \textit{finite gr-group scheme}
if $\kf[G]$ is finite dimensional. In that case we can define $\kf G$ as the graded dual of $\kf[G]$; $\kf G$ is a called the \textit{group algebra} for $G$. 
\end{definition}

\begin{definition}
 We say that a gr-group scheme $G$ is a \textit{positive gr-group scheme}
if $\kf[G]$ is positively graded. That is $\kf[G] = \bigoplus_{i \geq 0} (\kf[G])_i$. 
\end{definition}

\begin{definition}
We say that a gr-group scheme is \emph{algebraically connected} if the zero degree part is $\kf$, that is, $(\kf[G])_0 = \kf$. This is the same as saying that $\kf[G]$ is a connected graded Hopf algebra. 
\end{definition}

\subsection{Examples}

\begin{example}
Consider $\kf[t]^{gr}$ where $|t| = i$ and $\Delta(t) = t \otimes 1 + 1 \otimes t$. Then $\Homg(\kf[t]^{gr}, R ) = (R_i, +)$ 
\end{example}

\begin{example}  Let $R$ be a commutative graded ring. 
Let $\nil^m_i(R) = \{x \in R_i \,\, | \, \, x^m = 0\}$. Let $\ch(\kf) = p > 0$ and $\kf[t]^{gr}/(t^{p^r})$ with $|t| = i$ even, if $p > 2$, and any degree, if $p = 2$. Then $\kf[t]^{gr}/(t^{p^r})$ is a graded commutative Hopf algebra with $\Delta(t) =
t\otimes 1 + 1 \otimes t$ and $\Homg(\kf[t]^{gr}/(t^{p^r}), R) = \nil^{p^r}_i(R)$ is an additive group under
sums. We can see that this group structure
arises from the comultiplication of the algebra, since if $x, y \in
\nil^{p^r}_i(R)$, where $x(t) =  x$ and $y(t) = y$,  then the convolution
product gives that 
 $(x \ast y)(t) = m(x \otimes y)(\Delta(t)) = m(x \ast y)(t \otimes 1+1 \otimes
t) = x(t)y(1)+x(1)y(t)=  x+ y$, where we see $x$ and $y$ as functions in
$\Homg(\kf[t]^{gr}/(t^{p^r}), R)$.
\end{example}

\begin{example}[Dual Steenrod subalgebra $A(1)$]\label{A1}

Consider $A(1) = \F_2[\xi_1, \xi_2]^{gr}/(\xi_1^4, \xi_2^2)$, where $|\xi_1| = 1$ and $|\xi_2| =
3$. The Hopf algebra structure on $A(1)$
is given by $\Delta(\xi_1) = \xi_1 \otimes 1+ 1 \otimes \xi_1$, and
$\Delta(\xi_2) = \xi_2 \otimes 1 + \xi_1^2 \otimes \xi_1 + 1 \otimes \xi_2$.

As sets $\Homg(A(1), R) =  G_1(R) \times G_3(R)$ where $G_1(R) = \nil^{4}_1(R)$ and $G_2(R) = \nil^2_3(R)$ with product given by $(x,u) \ast (y,v) = (x+y, u+v+x^2y)$ and inverse given by $(x, u)^{-1} = (x, u+x^3)$. We will denote the gr-group scheme represented by $A(1)$ by 
$\St_1$.
\end{example}

\section{Graded group variety}\label{grvariety}

The structure of infinitesimal group schemes is well known, that is, for an infinitesimal group scheme, there is a description, as algebras, of its coordinate ring (c.f. \cite[14.4]{W}). Similarly, in the graded case, the structure of positively graded, algebraically connected Hopf algebras of finite type (each degree finite dimensional) is also known (c.f. \cite[7.8]{MM}). In this section we describe the structure of a class of gr-group schemes which we define and call \emph{graded group varieties}. A graded group variety is a graded group scheme that somehow carries the characteristics of infinitesimal group schemes and algebraically connected Hopf algebra of finite type that we need to understand their structure. Its coordinate rings is gr-local, just like for infinitesimal group schemes the coordinate ring is local. Also, the coordinate ring is of finite type, like the graded Hopf algebras in \cite{MM}.

\begin{definition}
Let $G$ be a gr-group scheme, and let $A = \kf[G]$. If $A$ is gr-local, positively graded, of finite type we say that $G$ is a \emph{graded group variety (gr-group variety)}. Note that by \ref{gr-local} if $A$ is positively graded then, $A$ is gr-local is equivalent to $A_0$ local. 
\end{definition}

\begin{rk}[About the choice of name: gr-group variety]
In algebraic geometry, a variety is a scheme which is, in particular, connected and of finite type (in the geometric sense). Given a gr-group scheme $G$, we can associate the geometric object $\Specg(A)$, where $A = \kf[G]$. If $G$ is a gr-group variety, $A_0$ local implies that $\Specg(A)$ is connected (c.f. \ref{conn}). Therefore in this sense, gr-group varieties are connected and of finite type. 

Note that an infinitesimal gr-group scheme as defined in  \ref{defcon} is a gr-group variety, but the class of gr-group varieties is broader since it is not required for $\kf[G]$ to be finite dimensional, but only of finite type. 
\end{rk}

\subsection{Antipode for graded group varieties}

As part of our quest of classifying gr-group varieties, a natural question is: How much information do we need, in order to understand the graded Hopf algebra structure of a gr-group variety?

In our attempt to answer that question, we give a generalization to the well known result that states that, if $A$ is an algebraically connected, positively graded bialgebra then there exists an antipode $S$ constructed from the relations on $\e$ and $\Delta$, making $A$ into a graded Hopf algebra. See \cite[Prop. 8.3]{MM}.

\begin{theorem}\label{antipode}
Let $A$ be a positively graded, gr-local bialgebra of finite type, there exists an antipode map $S$ making $A$ into a graded Hopf algebra, that is, $A$ is the coordinate ring of a gr-group variety. 
\end{theorem}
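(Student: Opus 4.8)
The plan is to construct the antipode as the convolution inverse of the identity map, following the strategy of \cite[Prop. 8.3]{MM} but accounting for the fact that the degree-zero part $A_0$ need not equal $\kf$. Recall that giving an antipode on the bialgebra $A$ amounts to producing a two-sided inverse of $\mathrm{id}_A$ in the convolution algebra $\End_{\kf}(A)$, whose unit is $\eta\e$ (where $\eta\colon \kf\to A$ is the unit and $\e$ the counit). Writing $\phi=\mathrm{id}_A-\eta\e$, the natural candidate is the Neumann series $S=\sum_{k\ge0}(-1)^k\phi^{\ast k}$, and a telescoping computation shows that if this series is well defined then $(\eta\e+\phi)\ast S=\eta\e=S\ast(\eta\e+\phi)$, so that $S$ is the desired antipode. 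The whole proof therefore reduces to a termination statement: on each graded piece of $A$, only finitely many of the $\phi^{\ast k}$ should be nonzero.

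First I would isolate the degree-zero block, which is where the gr-local hypothesis does its work. Since $\Delta$ is graded, $\Delta(A_0)\subseteq A_0\otimes A_0$, so $A_0$ is a sub-bialgebra; because $A$ is of finite type, $A_0$ is finite dimensional, and by the remark following the definition of gr-group variety, gr-local forces $A_0$ to be local. The restriction $\e|_{A_0}\colon A_0\to\kf$ is a surjective algebra map, so $\ker(\e|_{A_0})$ is a maximal ideal of codimension one; as $A_0$ is local this is its unique maximal ideal $\mathfrak m_0$, whence $A_0=\kf\cdot 1\oplus\mathfrak m_0$ with $\mathfrak m_0$ nilpotent, say $\mathfrak m_0^{N}=0$. (In particular this already shows $A_0$ itself is a Hopf algebra: a finite dimensional local commutative bialgebra always admits an antipode, since on $A_0$ one has $\phi^{\ast k}(A_0)\subseteq\mathfrak m_0^{\,k}=0$ for $k\ge N$.)

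The key step is then to combine this nilpotency with the positive grading to kill high convolution powers on all of $A$. For homogeneous $b$ of degree $n$ I would expand $\phi^{\ast k}(b)=m^{(k-1)}\big(\phi^{\otimes k}\,\Delta^{(k-1)}(b)\big)$. Because the coproduct is graded, each iterated term $b_1\otimes\cdots\otimes b_k$ has $\sum_i \deg b_i=n$, so at most $n$ of the factors carry positive degree; the remaining at least $k-n$ factors lie in $A_0$, and $\phi$ sends each of these into $\mathfrak m_0$, while on the positive-degree factors $\phi$ acts as the identity. Since the degree-zero factors are even and central, graded commutativity lets me collect them, so the product lands in $\mathfrak m_0^{\,k-n}\cdot A$. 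Hence $\phi^{\ast k}(b)=0$ as soon as $k-n\ge N$, that is, for $k\ge N+n$. Thus in each fixed degree the series for $S$ is a finite sum, $S$ is a well-defined graded $\kf$-linear endomorphism, and the telescoping identity above exhibits it as the antipode, making $A$ a graded Hopf algebra and hence the coordinate ring of a gr-group variety.

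The main obstacle is exactly the degree-zero block. In the classical connected case $A_0=\kf$ the map $\phi$ vanishes in degree zero and strictly raises degree, so termination is automatic and one simply inducts up the grading; here $\phi$ is nonzero on $\mathfrak m_0$ and does not raise degree, so there is no base case for that induction until one knows that $\mathrm{id}$ is already convolution-invertible on $A_0$. The finite-type and gr-local hypotheses are precisely what make $\mathfrak m_0$ finite dimensional and nilpotent, and the only genuine bookkeeping is the counting argument that at most $n$ tensor factors can carry the total degree $n$, together with the (harmless) check that the sign conventions of graded commutativity do not obstruct collecting the degree-zero factors, which they cannot since those factors have degree $0$.
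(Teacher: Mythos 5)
Your proposal is correct, but it takes a genuinely different route from the paper's. The paper works with bases and linear algebra: after normalizing the coproduct into the form $\Delta(a)=a\otimes 1+1\otimes a+\sum a_1\otimes m_2+\sum m_1\otimes a_2+(\text{lower-degree terms})$ with $m_i\in\mathfrak{m}$, it writes the left antipode identity $S\ast\mathrm{id}=\eta\e$ on a basis of each graded piece $A_k$ as a finite linear system over $A$ whose coefficient matrix is the identity plus a matrix with entries in $\mathfrak{m}$; the determinant lies in $1+\mathfrak{m}$, hence is a unit of the local ring $A_0$, so the system has a unique solution, and one inducts up the grading. You instead produce $S$ globally and basis-free as the Neumann series $\sum_{k}(-1)^k\phi^{\ast k}$ with $\phi=\mathrm{id}-\eta\e$, and the entire content is your termination argument: in $\Delta^{(k-1)}(b)$ with $|b|=n$ at most $n$ tensor factors carry positive degree, $\phi$ pushes the remaining (central, degree-zero) factors into $\mathfrak{m}_0=\ker(\e|_{A_0})$, and $\mathfrak{m}_0$ is nilpotent because $A_0$ is finite dimensional and local. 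The hypotheses are spent differently: the paper needs only that $1+\mathfrak{m}$ consists of units (no nilpotency), but uses finite type in every degree to get finite systems; you need $\mathfrak{m}_0^N=0$, which costs finite-dimensionality of $A_0$ but nothing in positive degrees, so your argument is if anything marginally more general than the stated theorem. Your construction also yields a manifestly two-sided convolution inverse from one telescoping computation, whereas the paper only solves the left-antipode equation and leaves the right-sided identity implicit. All the delicate points you flag (the identification $\ker(\e|_{A_0})=\mathfrak{m}_0$ via codimension one plus locality, the absence of Koszul signs when collecting degree-zero factors) are handled correctly, so there is no gap.
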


\begin{proof}
Let $(A_0, \mathfrak{m})$ be a local ring. Note that $\e$ restricts to a surjective map on $A_0$, hence, $\mathfrak{m} = \ker(\e) \cap A_0$ and $A_0/\mathfrak{m} \cong \kf$. If $a \in A_0$ we can write $a = \lambda +m$ where $\lambda \in \kf$ and  $m \in \mathfrak{m}$.

We will describe $\Delta(a)$ for $a \in A$. First, let $a \in A_0$, then $\Delta(a) = \sum a_1 \otimes a_2$ where $|a_1| = |a_2| = 0$. The counit diagram gives us that $a = \sum \e(a_1)a_2 = \sum a_1 \e(a_2)$. 

Write $a_1 = \lambda_1+m_1$ and $a_2 = \lambda_2 +m_2$ where $\lambda_i \in \kf$ and $m_i \in \mathfrak{m}$, and note that $\e(\lambda_i + m_i) = \e(\lambda_i) = \lambda_i$. So we can write $a = \sum \lambda_1a_2 = \sum a_1 \lambda_2$. Therefore, for $a \in A_0$ 
$$\Delta(a) = a \otimes 1+ 1 \otimes a + \sum a_1 \otimes m_2 + \sum m_1 \otimes a_2.$$

If $m \in \mathfrak{m}$, we can write $\Delta(m) = m \otimes 1 + 1 \otimes m + \sum a_1 \otimes a_2$ and rewriting $a_i = \lambda_i + m_i$ by the counit map we get that $\sum a_1\lambda_2 = \sum \lambda_1 a_2 = 0$. Hence we can write $$\Delta(m) =  m \otimes 1 + 1 \otimes m  +  \sum m_1 \otimes m_2,$$ where $m_i \in \mathfrak{m}.$

If $a$ is nonzero degree then with similar arguments we can write $$\Delta(a) = a \otimes 1 + 1 \otimes a + \sum a_1 \otimes m_2 + \sum m_2 \otimes a_2 + \sum b_1 \otimes b_2,$$

where $m_i \in \mathfrak{m}$, $|a_i| = |a|$ and $|b_i| < |a|$. 

We can derive $S$ using that $(S \ast id)(a) = \e(a)1_A$, where $\ast$ is the convolution product for $\Homa(A,A)$.  

Without loss of generality, a basis for $A_0$ is of the form $\{1, m_1, \ldots, m_k\}$, then $S$ must satisfy

$$S(m_l) + m_l + \sum_{1 \leq i,j \leq k} \alpha_{lij}S(m_i)m_j = 0, $$

where $\alpha_{lij} \in \kf$. 

We can rewrite the equation as $$S(m_l)\left[1+n_l\right] + \sum_{\substack{i \neq l \\ j }} \alpha_{lij} S(m_i)m_j = -m_j,$$

where $n_l = \sum_j \alpha_{llj}m_j \in \mathfrak{m}$. 

Think of the above equation as a $(k \times k)$ system in $A$ where $S(m_1), \ldots, S(m_k)$ are the unknowns. The system corresponds to the matrix 
$$\begin{bmatrix}
1+n_1 & r_{12} & \cdots & r_{1n} \\
r_{21} & 1+n_{2} & \cdot & r_{2n}\\
\vdots & & \ddots & \vdots \\
r_{n1} & \cdots &  & 1+n_{k}
\end{bmatrix},$$

where $r_{li} = \sum_j \alpha_{lij}m_j$. By inspection, this matrix's determinant is in $1 + \mathfrak{m}$, hence, it is invertible and the system has a unique solution as desired. 

Let $\{a_1, \ldots, a_n\}$ be a basis for $A_k$ for $k > 0$,  then $S$ must satisfy
$$S(a_l) +a_l + \sum_{i = 1}^n \left[ S(a_i)(m_{li} +S(\widehat{m}_{li})a_i\right] + \sum S(b_1)b_2 = 0, $$
where $m_{li}, \widehat{m}_{li} \in \mathfrak{m}$ and $b_i$ are of lower degree. 

We can rewrite as $$S(a_l)(1+m_{ll}) + \sum_{i \neq l} S(a_i)m_{li} = - \left[ \sum_{i = 1}^n S(\widehat{m}_{li})a_i + \sum S(b_1)b_2 +a_l\right].$$

Note that all the terms in the right hand side are known by induction. Think of the above equation as an $(n \times n)$ system in $A$ where $S(a_1), \ldots, S(a_n)$ are the unknowns. The system corresponds to the matrix 
$$\begin{bmatrix}
1+m_{11} & m_{12} & \cdots & m_{1n} \\
m_{21} & 1+m_{22} & \cdot & m_{2n}\\
\vdots & & \ddots & \vdots \\
m_{n1} & \cdots &  & 1+m_{nn}
\end{bmatrix}.$$

Just as before, the system has a unique solution as desired. 
\end{proof}

As an interesting fact, theorem \ref{antipode} is an algebraic analogue to the following theorem in algebraic geometry, regarding abelian varieties. 

\begin{theorem}(From \cite[Appendix 4]{M})
Let $X$ be a complete variety, $e \in X$ a point, and 
$$m: X \times X \to X$$
a morphism such that $m(x,e) = m(e,x) = x$ for all $x \in X$. Then $X$ is an abelian variety with group laws and identity $e$. 
\end{theorem}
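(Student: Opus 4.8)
The plan is to make the \emph{Rigidity Lemma} the engine of the whole argument and to extract the three group axioms (associativity, inverses, commutativity) from it. First I would prove the lemma: if $X$ is complete and $f\colon X\times Y\to Z$ is a morphism with $f(X\times\{y_0\})=\{z_0\}$ for some $y_0\in Y$, then $f$ factors through the projection to $Y$; and in its \emph{doubly constant} form, if moreover $f(\{x_0\}\times Y)=\{z_0\}$ then $f$ is constant. The proof uses completeness crucially: choosing an affine open $U\ni z_0$, the set $W=\mathrm{pr}_Y\bigl(f^{-1}(Z\setminus U)\bigr)$ is closed (since $X$ complete makes $\mathrm{pr}_Y$ closed) and misses $y_0$, so on the dense open $V=Y\setminus W$ every slice $X\times\{y\}$ maps into the affine $U$; a complete connected variety admits only constant maps to an affine, so $f$ is independent of the $X$-coordinate over $V$, and separatedness of $Z$ with irreducibility of $X\times Y$ propagates this to all of $X\times Y$.

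Granting for the moment that $X$ is already a group with inverse morphism $\iota$ (so $\iota(e)=e$), commutativity then falls out cleanly. I would apply the doubly constant form to $\psi(x,y)=m\bigl(m(x,y),\,m(\iota(x),\iota(y))\bigr)$: the unit and inverse relations make $\psi$ identically $e$ on both $\{e\}\times X$ and $X\times\{e\}$, so $\psi\equiv e$, which reads $m(\iota(x),\iota(y))=m(x,y)^{-1}$. Comparing with the group identity $m(x,y)^{-1}=m(\iota(y),\iota(x))$ yields $m(\iota(x),\iota(y))=m(\iota(y),\iota(x))$, and since $\iota$ is bijective this is exactly commutativity of $m$. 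Associativity, once inverses are available, follows from the same mechanism applied to the associator morphism $X^3\to X$, which agrees with the trivial map on each coordinate slice where one entry is $e$.

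The real difficulty, and the step I expect to fight hardest with, is \emph{producing} the monoid/group structure from the bare unital morphism, because every rigidity proof of associativity or commutativity secretly requires cancellation — which is circular until a group is in hand. My plan is to break the circle with a completeness/dimension argument on translations. Consider $\mu\colon X\times X\to X\times X$, $\mu(x,a)=(m(x,a),a)$; since $X$ is complete the image of $\mu$ is closed, and because $\mu$ restricts to the identity on $X\times\{e\}$ (hence has zero-dimensional fibres there), upper semicontinuity of fibre dimension forces $\mu$ generically finite, so its image is closed, irreducible, and of dimension $2\dim X$, hence all of $X\times X$. Thus every right translation $\rho_a=m(-,a)$ is surjective, and symmetrically every left translation $\lambda_a=m(a,-)$; I would then upgrade these to isomorphisms so that $m(x,a)=b$ and $m(a,y)=b$ are uniquely solvable.

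The crux is to interleave this solvability argument with the rigidity applications in a \emph{non-circular order}: once associativity is secured, surjective translations on an associative unital magma give a genuine inverse morphism, and the arguments of the second paragraph then deliver commutativity. The delicate point — the one the cited appendix really has to work for — is arranging that associativity and the inverse are obtained before, rather than after, the cancellation-dependent steps that would otherwise presuppose them.
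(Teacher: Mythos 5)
First, a point of comparison: the paper does not prove this statement at all --- it quotes it from \cite[Appendix 4]{M} (the appendix to \S 4 of Mumford's book, due to C.~P.~Ramanujam) purely as geometric motivation for Theorem~\ref{antipode}. So your proposal has to be judged as a self-contained proof of the quoted theorem, not against any internal argument of the paper.

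Judged that way, it has a genuine gap, and you have in fact located it yourself. What you actually establish is: (i) the rigidity lemma (your proof sketch is correct); (ii) surjectivity of all left and right translations, via the fibre-dimension argument applied to $\mu(x,a)=(m(x,a),a)$ (also correct); (iii) commutativity \emph{assuming} $(X,m,e,\iota)$ is already a group variety; and (iv) associativity \emph{assuming} an inverse morphism $\iota$ together with cancellation. Items (iii) and (iv) each presuppose what the other is supposed to supply: to read off associativity from the rigidity identity $m\bigl(m(m(x,y),z),\,\iota(m(x,m(y,z)))\bigr)\equiv e$ you must cancel, i.e.\ already know that $u\mapsto m(u,w)$ is injective, while your route to injectivity and to $\iota$ itself (``surjective translations on an associative unital magma give a genuine inverse morphism'') presupposes associativity. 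The dimension argument cannot break this circle: it yields only surjectivity, and the sentence ``I would then upgrade these to isomorphisms'' is precisely the content of the theorem, asserted rather than proved. Note also that even set-theoretic bijectivity of $\mu$ would not suffice, since a bijective morphism of complete varieties need not be an isomorphism (Frobenius in positive characteristic, or the normalization of a cuspidal curve), so a pointwise inverse is not automatically a morphism. Producing cancellation, the inverse morphism, and associativity from nothing but the unital multiplication is exactly where Ramanujam's appendix does its real work, and it is the one step your plan leaves blank.
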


Rephrasing, this theorem says that if $X$ is a projective variety with multiplication and identity, then there exists an inverse making $X$ into a group scheme.

\subsection{Algebraic connectivization}

Most of the literature regarding graded Hopf algebras assume these algebras to be algebraically connected. This is not the case in this article. The next construction is a bridge between the classical setting and our broader class of graded Hopf algebras.

\begin{definition}\label{kappadef}
Let $A$ be a graded Hopf algebra. Let $\kappa(A) = A \otimes_{A_0} \kf$. We call $\kappa(A)$ the \emph{algebraic connectivization} of $A$. 
\end{definition}

\begin{theorem}
Let $A$ be a positively graded Hopf algebra. The algebraic connectivization of $A$, $\kappa(A)$ is an algebraically connected graded Hopf algebra.  
\end{theorem}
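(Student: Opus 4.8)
The plan is to exhibit $\kappa(A)$ as a quotient Hopf algebra of $A$ by a Hopf ideal and then read off connectivity directly from the grading. The $A_0$-module structure on $\kf$ is the one induced by the counit $\e$, whose restriction $\e|_{A_0} : A_0 \to \kf$ is a surjective algebra map (since $\e(1) = 1$ and $1 \in A_0$). Writing $J = \ker(\e|_{A_0}) \subseteq A_0$ for the associated augmentation ideal, right-exactness of $-\otimes_{A_0}\kf$ applied to $0 \to J \to A_0 \to \kf \to 0$ identifies
$$\kappa(A) = A \otimes_{A_0} \kf \cong A/(A \cdot J).$$
Because $J$ lies in degree zero and hence in the (graded) center of $A$, the subspace $I := A\cdot J$ is a two-sided graded ideal, so $\kappa(A)$ is at least a graded algebra.

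First I would observe that $A_0$ is a graded Hopf subalgebra of $A$: since $A$ is positively graded and $\Delta, \e, S$ all preserve degree, we have $\Delta(A_0) \subseteq (A \otimes A)_0 = A_0 \otimes A_0$, $\e(A_0) \subseteq \kf$, and $S(A_0) \subseteq A_0$. In particular $J$ is the augmentation ideal of the bialgebra $A_0$, and the standard computation
$$\Delta(j) = j \otimes 1 + 1 \otimes j + \textstyle\sum (j_1 - \e(j_1)1)\otimes(j_2 - \e(j_2)1), \qquad j \in J,$$
together with the fact that each factor $j_1 - \e(j_1)1$ and $j_2 - \e(j_2)1$ lies in $J$, shows $\Delta(J) \subseteq J \otimes A_0 + A_0 \otimes J$. (Here positivity of the grading is what rules out components of $\Delta(j)$ in $A_{-n}\otimes A_n$.)

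The main step is to check that $I = A\cdot J$ is a Hopf ideal, that is, $\e(I) = 0$, $S(I) \subseteq I$, and $\Delta(I) \subseteq I \otimes A + A \otimes I$. The first two are quick: $\e(aj) = \e(a)\e(j) = 0$, and since $S$ preserves degree with $\e \circ S = \e$ we get $S(j) \in J$, whence $S(aj) = S(j)S(a) \in I$ (the sign from graded anticommutativity is trivial as $|j| = 0$). For the comultiplication condition the key observation is that $I \otimes A + A \otimes I$ is itself a two-sided ideal of $A \otimes A$ (as $I$ is an ideal of $A$); since the generators already satisfy $\Delta(j) \in J \otimes A_0 + A_0 \otimes J \subseteq I \otimes A + A \otimes I$, multiplicativity of $\Delta$ gives $\Delta(aj) = \Delta(a)\Delta(j) \in I \otimes A + A \otimes I$ for all $a \in A$, $j \in J$. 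Thus $I$ is a Hopf ideal and $\kappa(A) = A/I$ inherits a graded Hopf algebra structure.

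Finally, connectivity is immediate from the grading: since $J$ is an ideal of $A_0$ one has $(A\cdot J)_0 = A_0 \cdot J = J$, so $(\kappa(A))_0 = A_0/J \cong \kf$, i.e. $\kappa(A)$ is algebraically connected. I expect the only real subtlety to be the comultiplication condition — concretely, recognizing $A_0$ as a Hopf subalgebra so that $J$ acquires the coideal property, and then propagating that property from the generators of $I$ to all of $I$ via the ideal structure of $I \otimes A + A \otimes I$.
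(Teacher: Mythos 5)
Your proof is correct, and it takes a genuinely different route from the paper's. The paper works directly on the tensor product $A\otimes_{A_0}\kf$: it declares $\kappa(\e)=\e\otimes_{A_0}\kf$, $\kappa(S)=S\otimes_{A_0}\kf$, defines $\kappa(\Delta)$ by composing $\Delta\otimes_{A_0}\kf$ with a collapsing map $A\otimes(A\otimes_{A_0}\kf)\to\kappa(A)\otimes\kappa(A)$, writes out the resulting explicit formula $\kappa(\Delta)(a)=\sum a_1\otimes\e(a_2)+\sum\e(b_1)\otimes b_2+\sum c_1\otimes c_2$, and asserts the Hopf axioms by a diagram chase; connectivity then follows from $\kappa(A)_0=A_0\otimes_{A_0}\kf\cong\kf$. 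You instead identify $\kappa(A)\cong A/AJ$ with $J=\ker(\e|_{A_0})$ via right-exactness, observe that positivity forces $\Delta(A_0)\subseteq(A\otimes A)_0=A_0\otimes A_0$ so that $A_0$ is a Hopf subalgebra and $J$ a coideal, and then verify that $AJ$ is a Hopf ideal, so the Hopf structure on the quotient comes for free from the general theory rather than from a case-by-case check of axioms. Your approach isolates exactly where positivity is used and replaces the paper's ``quick chasing of diagrams'' with the standard quotient-by-a-Hopf-ideal mechanism, which is arguably more rigorous; the paper's approach has the advantage of producing the explicit formula for $\kappa(\Delta)$ that is reused later (e.g.\ in the structure theorems), and of setting up the tensor-product description $A\cong\kappa(A)\otimes A_0$ that is justified afterwards via freeness of $A$ over $A_0$. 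The two induced comultiplications agree, since your quotient map sends $a$ to its class modulo $AJ$ and the paper's collapsing map is precisely reduction of the right-hand tensor factors modulo $J$.
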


\begin{proof}
The counit and antipode maps for $\kappa(A)$ are given by $\kappa(\e) = \e \otimes_{A_0} \kf$ and $\kappa(S) = S \otimes_{A_0} \kf$. The comultiplication is given by the map $\kappa(\Delta)$ that makes the following diagram commute. 

$$\xymatrix{\kappa(A) \ar[r]^-{\Delta \otimes_{A_0} \kf} \ar[dr]_{\kappa(\Delta)} & A \otimes (A \otimes_{A_0} \kf) \ar[d] \\
& \kappa(A) \otimes \kappa(A)}$$

More explicitly if $a \in A$ we write $$\Delta(a) = \sum_{|a_1| = |a|}  a_1 \otimes a_2 + \sum_{|b_2| = |a|} b_1 \otimes b_2 + \sum_{|c_1|, |c_2| \neq |a|} c_1 \otimes c_2$$ where $|a_1|+|a_2| = |b_1|+|b_2| = |c_1|+|c_2| = |a|$ then 

$$\kappa(\Delta)(a) = \sum a_1 \otimes \e(a_2) + \sum \e(b_1) \otimes b_2 + \sum c_1 \otimes c_2.$$

With the counit, antipode and comultiplication as above, a quick chasing of diagrams shows that $\kappa(A)$ is a graded Hopf algebra over $\kf$. Note that its zeroth part is $\kappa(A)_0 = (A \otimes_{A_0} \kf)_0 = A_0 \otimes_{A_0} \kf  \cong \kf$. Hence $\kappa(A)$ algebraically connected and that $A \cong (A \otimes_{A_0} \kf) \otimes A_0  \cong \kappa(A) \otimes A_0$ as graded algebras. 
\end{proof}

\begin{lemma} Let $A$ be the coordinate ring of a gr-group variety. The connectivization $\kappa(A)$ is conormal, in the sense that there exists a short exact sequence of graded Hopf algebras
$$
\kf\rightarrow A_0\rightarrow A\rightarrow\kappa(A)\rightarrow\kf.
$$
\end{lemma}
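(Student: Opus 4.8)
The plan is to exhibit the three pieces of the asserted sequence explicitly: the inclusion $\iota\colon A_0\hookrightarrow A$, the quotient projection $\pi\colon A\to\kappa(A)$, and then to verify that together they constitute a short exact sequence of graded Hopf algebras. Concretely, I must check that $\iota$ is an injective Hopf map, that $\pi$ is a surjective Hopf map with $\ker\pi=A\,A_0^{+}$ (the ideal generated by the augmentation ideal $A_0^{+}=\ker\e\cap A_0$), and that $A_0$ is exactly the coinvariant subalgebra $A^{\mathrm{co}\,\pi}=\{a\in A:(\mathrm{id}\otimes\pi)\Delta(a)=a\otimes 1\}$.

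First I would record that $A_0$ is a central sub-Hopf algebra of $A$. Centrality is immediate from graded commutativity, since a degree-zero element satisfies $ax=(-1)^{0\cdot|x|}xa=xa$ for every homogeneous $x$. That $A_0$ is a subalgebra and that $\e$ restricts to it are clear; the coalgebra part uses positivity crucially, for since $\Delta$ preserves degree and $A$ is positively graded, the degree-zero component of $A\otimes A$ is $A_0\otimes A_0$, whence $\Delta(A_0)\subseteq A_0\otimes A_0$. Finally the antipode $S$ produced in Theorem \ref{antipode} is degree-preserving (it is the convolution inverse of the degree-zero identity), so $S(A_0)\subseteq A_0$. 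Since $A_0$ is local with $A_0/\mathfrak m\cong\kf$ and $\e|_{A_0}$ surjects onto $\kf$, its kernel is the unique maximal ideal, so $A_0^{+}=\ker\e\cap A_0=\mathfrak m$.

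Next I would show that $I:=A\,A_0^{+}$ is a Hopf ideal, so that $\kappa(A)=A/I$ is a quotient Hopf algebra and $\pi$ a Hopf map. By centrality $I=A_0^{+}A=A\,A_0^{+}$ is two-sided and $\e(I)=0$. The comultiplication computation in Theorem \ref{antipode} gives, for $m\in\mathfrak m=A_0^{+}$, the form $\Delta(m)=m\otimes 1+1\otimes m+\sum m_1\otimes m_2$ with all $m_i\in\mathfrak m$; hence $\Delta(A_0^{+})\subseteq A_0^{+}\otimes A+A\otimes A_0^{+}\subseteq I\otimes A+A\otimes I$, and multiplying by $\Delta(a)$ yields $\Delta(I)\subseteq I\otimes A+A\otimes I$. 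Since $S(A_0)\subseteq A_0$ and $\e\circ S=\e$, we get $S(A_0^{+})\subseteq A_0^{+}$ and therefore $S(I)\subseteq I$. This identifies $\kappa(A)=A/I$ with the quotient whose comultiplication is the map $\kappa(\Delta)$ constructed earlier, which the preceding theorem already records as algebraically connected; and $\ker\pi=I=A\,A_0^{+}$ holds by construction.

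The remaining, and genuinely substantive, point is exactness in the middle, namely $A^{\mathrm{co}\,\pi}=A_0$. The inclusion $A_0\subseteq A^{\mathrm{co}\,\pi}$ is a short computation: on $A_0$ the map $\pi$ factors through $\e$ (as $A_0^{+}\subseteq\ker\pi$), so $(\mathrm{id}\otimes\pi)\Delta(a)=\sum a_1\otimes\e(a_2)1=a\otimes 1$ by the counit axiom. For the reverse inclusion I would use that $A^{\mathrm{co}\,\pi}$ is a graded subspace (both $\Delta$ and $\pi$ preserve degree) and show it has no positive-degree part. Here the decomposition $A\cong\kappa(A)\otimes A_0$ as graded algebras, established in the preceding theorem, is the key leverage: it exhibits $A$ as a free, hence faithfully flat, $A_0$-module, and for a faithfully flat central Hopf subalgebra over a field the canonical sequence $\kf\to A_0\to A\to A/\!/A_0\to\kf$ is exact, forcing $A^{\mathrm{co}\,\pi}=A_0$. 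I expect this verification of exactness, as opposed to merely producing a complex of Hopf maps, to be the main obstacle. The alternative to invoking faithful flatness is a direct graded argument: apply $\mathrm{id}\otimes\pi$ to the explicit expansion of $\Delta(a)$ from Theorem \ref{antipode} for a homogeneous coinvariant $a$ of degree $d>0$, extract the bidegree $(0,d)$ component and project the first tensor factor onto $\kf\cdot 1$ (using $A_0=\kf\cdot 1\oplus\mathfrak m$) to force $\pi(a)=0$, and then conclude $a=0$ by induction on degree; but I would favour the faithful-flatness route as cleaner and less computation-heavy.
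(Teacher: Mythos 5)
Your proposal is correct and matches the paper's approach: the paper's entire proof is the one-line assertion that $A_0$ is the cotensor $A\,\square_{\kappa(A)}\,\kf$, i.e.\ precisely the coinvariant identity $A^{\mathrm{co}\,\pi}=A_0$ that you single out as the substantive point and then verify via freeness of $A$ over $A_0$. You have simply supplied the details (Hopf-ideal check for $A\,A_0^{+}$, centrality, and the faithfully flat descent step) that the paper leaves to the reader.
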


\begin{proof} It can be easily checked that $A_0$ is the cotensor $A\hspace{-2ex}\qed_{\kappa(A)}\kf$.
\end{proof}

Since all evenly graded Hopf algebras are Hopf algebras, the previous lemma has the following remarkable corollary.

\begin{corollary} Let $A$ be a Hopf algebra which admits a non-trivial grading making it into the coordinate ring of a gr-group variety. The group $\Spec A$ has a non-trivial normal closed subgroup.
\end{corollary}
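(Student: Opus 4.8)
The plan is to unwind the definitions and let the preceding lemma do the heavy lifting. We are handed a Hopf algebra $A$ in the ordinary (ungraded) sense, together with a nontrivial grading that turns $A$ into the coordinate ring of a gr-group variety. The corollary asserts that the ordinary affine group scheme $\Spec A$ has a nontrivial normal closed subgroup. First I would recall the standard dictionary between closed subgroup schemes and Hopf algebra quotients: a normal closed subgroup scheme of $\Spec A$ corresponds to a surjection of Hopf algebras $A \twoheadrightarrow B$ that is \emph{conormal} (equivalently, the kernel ideal is generated by a Hopf subalgebra on which the group acts trivially by conjugation), and the subgroup itself is recovered as $\Spec(A \square_B \kf)$. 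So the entire task reduces to producing such a conormal quotient, and to checking that it is nontrivial.

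The key step is to feed in the previous lemma. That lemma gives, for the coordinate ring $A$ of a gr-group variety, a short exact sequence of graded Hopf algebras
$$
\kf \rightarrow A_0 \rightarrow A \rightarrow \kappa(A) \rightarrow \kf,
$$
where $A_0 = A \square_{\kappa(A)} \kf$. The remark preceding the corollary observes that evenly graded Hopf algebras are honest Hopf algebras, so I would first use the grading to view $A$, $A_0$, and $\kappa(A)$ as ungraded Hopf algebras (the comment "all evenly graded Hopf algebras are Hopf algebras" is exactly the device that lets us forget the grading and land in the classical category). Under the correspondence above, the exactness and the conormality statement $A_0 = A \square_{\kappa(A)} \kf$ say precisely that the surjection $A \twoheadrightarrow \kappa(A)$ is a conormal Hopf quotient, hence that the closed subgroup $\Spec A_0 \hookrightarrow \Spec A$ is normal. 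Thus the normal closed subgroup we want is $\Spec A_0$, sitting inside $\Spec A$ as the kernel of the quotient map $\Spec A \to \Spec \kappa(A)$.

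It then remains to verify \emph{nontriviality}, and this is where I expect the only real friction. Triviality of the subgroup $\Spec A_0$ would mean $A_0 = \kf$, i.e.\ the degree-zero part of $A$ is just the ground field; and triviality of $\kappa(A)$ (the other extreme) would mean $A = A_0$ is concentrated in degree zero. I would argue as follows: the grading is assumed \emph{nontrivial}, so $A$ is not concentrated in degree zero, which forces $\kappa(A) \neq \kf$ and hence the quotient $\Spec A \to \Spec \kappa(A)$ is not the trivial map; dually, from $A \cong \kappa(A) \otimes A_0$ (established in the connectivization theorem) together with $A \neq \kappa(A)$ one gets $A_0 \neq \kf$, so $\Spec A_0$ is a genuinely nontrivial subgroup. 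The delicate point is making sure "nontrivial grading" is interpreted so that \emph{both} endpoints of the exact sequence are nondegenerate; I would state explicitly that by a nontrivial grading we mean one in which $A$ is supported in more than just degree zero, so that $\kappa(A) \neq \kf$, and invoke the isomorphism $A \cong \kappa(A) \otimes A_0$ to conclude $A_0 \neq \kf$. With both factors nontrivial, $\Spec A_0$ is the desired nontrivial normal closed subgroup, completing the proof.
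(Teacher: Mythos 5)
Your overall strategy --- deduce the corollary from the conormal short exact sequence of the preceding lemma, after using the grading hypothesis to land in the ungraded category --- is exactly what the paper intends (it offers no proof beyond that remark). But you have the contravariance of $\Spec$ backwards, and this reverses which end of the sequence is the subgroup and which is the quotient. A closed subgroup of $\Spec A$ corresponds to a Hopf algebra \emph{surjection} $A \twoheadrightarrow B$, and the subgroup is $\Spec B$ itself; the cotensor product of $A$ with $\kf$ over $B$ is the coordinate ring of the \emph{quotient} group, not of the subgroup. Applied to the sequence $\kf \to A_0 \to A \to \kappa(A) \to \kf$, the normal closed subgroup is therefore $\Spec\kappa(A)$: it is the kernel of the faithfully flat quotient map $\Spec A \to \Spec A_0$ induced by the inclusion $A_0 \hookrightarrow A$, and the lemma's identification of $A_0$ as the cotensor $A\,\square_{\kappa(A)}\,\kf$ is precisely what exhibits $\Spec A_0$ as the quotient and hence $\Spec\kappa(A)$ as normal. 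Your candidate $\Spec A_0$ is the quotient, not the kernel, and the map ``$\Spec A \to \Spec\kappa(A)$'' you invoke does not exist: the surjection $A\to\kappa(A)$ induces a closed immersion $\Spec\kappa(A)\hookrightarrow\Spec A$, not a projection.

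The swap also breaks your nontriviality argument. For the correct subgroup $\Spec\kappa(A)$, nontriviality is immediate: a nontrivial grading means $A\neq A_0$, and since $A\cong\kappa(A)\otimes A_0$ is free over $A_0$, this forces $\kappa(A)\neq\kf$. For your candidate $\Spec A_0$ you would instead need $A_0\neq\kf$, and that does \emph{not} follow from the grading being nontrivial: the dual Steenrod subalgebra $A(1)$ of Example \ref{A1} is a nontrivially graded coordinate ring of a gr-group variety with degree-zero part $\F_2$, so your inference ``$A\neq\kappa(A)$, hence $A_0\neq\kf$'' has no basis (there one has $A=\kappa(A)$ and $A_0=\kf$). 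The repair is mechanical --- exchange the roles of $A_0$ and $\kappa(A)$ throughout and run your nontriviality check on $\kappa(A)$ --- but as written the proof identifies the wrong object as the normal subgroup.
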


\begin{lemma}\label{freeind}
Let $C$ be a local ring $(C,\mathfrak{m})$ with algebra map $\Delta: C \to C \otimes C$ of the form $\Delta(a) = a \otimes 1 + 1 \otimes a + \sum a_1 \otimes a_2,$
where $a_1, a_2 \in \mathfrak{m}$. If $M$ is a finitely generated $C$-module with some map $\Delta_M: M \to M \otimes C$ such that for $a \in C$ and $x \in M$
\begin{itemize}
\item $\Delta_M(ax) = \Delta(a)\Delta_M(x)$, where $\Delta$ is the comultiplication on $C$, and
\item $\Delta_M(x) = x \otimes 1+x \otimes b + \sum b_1 x \otimes b_2$, where $b, b_1, b_2 \in \mathfrak{m}$
\end{itemize}
then $M$ is a free $C$-module. 
\end{lemma}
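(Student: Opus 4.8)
The plan is to recognize $M$ as a Hopf module over $C$ and to deduce freeness from a local analogue of the fundamental theorem of Hopf modules. First I would record the two structural facts the hypotheses provide. Condition (1) says precisely that $\Delta_M$ is a map of left $C$-modules, where $M\otimes C$ carries the $C$-action twisted by $\Delta$, i.e. $c\cdot(m\otimes e)=\sum c_{(1)}m\otimes c_{(2)}e$; together with the counit supplied by the residue map $\e\colon C\to C/\mathfrak{m}=\kf$, this makes $M$ a left-module right-comodule Hopf module over the bialgebra $C$. Condition (2) I would use only in the normalized form $\Delta_M(x)\equiv x\otimes 1\pmod{M\otimes\mathfrak{m}}$, which is exactly what makes the comodule structure trivial modulo $\mathfrak{m}$. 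I would also observe that $C$ is in fact a Hopf algebra: the antipode $S$ is produced exactly as in the proof of Theorem \ref{antipode} for the degree-zero part, since the hypotheses on $(C,\mathfrak{m})$ and on the form of $\Delta$ coincide with those satisfied there by $A_0$ (the relevant linear systems have coefficient matrices in $1+\mathfrak{m}\,\mathrm{Mat}(C)$, hence invertible), and $S$ preserves $\mathfrak{m}$.

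With this in place, set $M^{co}=\{x\in M:\Delta_M(x)=x\otimes 1\}$ and consider the multiplication map $\mu\colon C\otimes_{\kf}M^{co}\to M$, $c\otimes n\mapsto cn$. The goal is to show $\mu$ is an isomorphism, for then $M\cong C\otimes_{\kf}M^{co}$ is free, and since $M^{co}\cong M/\mathfrak{m}M$ is finite dimensional, $M$ is free of finite rank. For surjectivity I would use the projector $P\colon M\to M$, $P(x)=\sum S(x_{(1)})x_{(0)}$ (writing $\Delta_M(x)=\sum x_{(0)}\otimes x_{(1)}$): the antipode and counit axioms give $P(x)\in M^{co}$, and the normalization from (2), together with $S(\mathfrak{m})\subseteq\mathfrak{m}$, gives $P(x)\equiv x\pmod{\mathfrak{m}M}$. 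Hence $M^{co}+\mathfrak{m}M=M$, so $C\cdot M^{co}=M$ by Nakayama's lemma. For injectivity I would exhibit the inverse $\nu\colon M\to C\otimes_{\kf}M^{co}$, $x\mapsto\sum x_{(1)}\otimes S(x_{(2)})x_{(0)}$ (writing $\sum x_{(0)}\otimes x_{(1)}\otimes x_{(2)}=(\mathrm{id}\otimes\Delta)\Delta_M(x)$ by coassociativity), and verify $\mu\nu=\mathrm{id}$ and $\nu\mu=\mathrm{id}$ by the usual antipode bookkeeping.

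The main obstacle is making the antipode and the projector genuinely available without assuming $C$ finite dimensional, and, relatedly, making the two infinite sums above converge. The key point is that all the operators in play ($S$, $P$, and $\nu$) are unipotent with respect to the $\mathfrak{m}$-adic filtration: the ``$\sum a_1\otimes a_2$'' corrections in $\Delta$ and the $M\otimes\mathfrak{m}$ corrections in $\Delta_M$ raise filtration degree, so each operator has the shape $\mathrm{id}+(\text{filtration-raising})$ and is invertible on each finite stage. Concretely I would first run the argument on the associated graded, where $\mathfrak{m}/\mathfrak{m}^2$ is primitive and $\mathrm{gr}\,\Delta_M$ is trivial in degree zero, prove $\mathrm{gr}\,M$ is free over $\mathrm{gr}\,C$, and then lift to $M$ using that $M$ is finitely generated and $C$ is $\mathfrak{m}$-adically separated. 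One secondary point must be checked or added as a hypothesis: that $\Delta_M$ is coassociative and counital. This is automatic in the intended application, where $C=A_0$ and $\Delta_M=(\mathrm{id}\otimes\mathrm{proj})\circ\Delta$ with $\mathrm{proj}\colon A\to A_0$ the Hopf projection, and it is what legitimizes the comodule manipulations above.
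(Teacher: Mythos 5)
Your proposal is correct in outline but takes a genuinely different, and much heavier, route than the paper, and it quietly strengthens the hypotheses. The paper's own argument is a short induction on a minimal generating set: in the one-generator case $M=Cx$, a relation $ax=0$ with $a\in\mathfrak{m}$ is fed through $\Delta_M(ax)=\Delta(a)\Delta_M(x)$, the two displayed conditions are used to expand, and the vanishing of the resulting element forces either $a(1+b)=0$ or $x(1+m)=0$ with $b,m\in\mathfrak{m}$, hence $a=0$ or $x=0$ since $1+\mathfrak{m}$ consists of units; the $k$-generator case reduces to this by passing to $M/(x_i)$. That proof uses no counit, no coassociativity, no antipode, and no finiteness of $C$. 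Your route instead promotes $(C,\Delta)$ to a Hopf algebra and $M$ to a Hopf module and invokes (a local refinement of) the fundamental theorem of Hopf modules. This is more conceptual and buys a stronger conclusion, namely $M\cong C\otimes_{\kf}M^{co}$ rather than bare freeness, but it needs three inputs the lemma does not supply: (i) coassociativity and counitality of $\Delta$ and $\Delta_M$, which you flag yourself; (ii) $C/\mathfrak{m}\cong\kf$, without which the residue map is not a counit over $\kf$ and $M^{co}$ is not naturally a $\kf$-space; and (iii) an antipode, whose construction via the linear systems of Theorem \ref{antipode} uses a finite basis of $C$, while your associated-graded workaround additionally needs $\mathfrak{m}$-adic separatedness, which a bare local ring need not have. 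All three hold in the paper's sole application of the lemma, where $C=A_0$ is a finite-dimensional local bialgebra with residue field $\kf$ and $\Delta_M$ is induced from the coassociative $\Delta$ on $A$, so your argument does establish what the lemma is used for; but as a proof of the lemma as stated it has imported hypotheses, whereas the paper's elementary induction works from the two bullet points alone. One small correction: the sums defining $P$ and $\nu$ are finite, being expansions of elements of tensor products, so the convergence worry is misplaced; the only genuine finiteness issue in your approach is the existence of the convolution inverse $S$.
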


\begin{proof} Note that the first condition on the statement is saying that $M \otimes C$ is a $C$-module via $\Delta$ and that $\Delta_M : M \to M \otimes M \otimes C$ is a $C$-module morphism. 
The proof is by induction on the number of generators of $M$. 
 Let $\{x_1,\ldots,x_k\}$ be a minimal set of generators for $M$ over $C$. 
 
 Let $M  = Cx$. If $M$ is not free, then there exists $a \in \mathfrak{m}$ such that $ax=0$. For 
 $$\Delta_M(ax) = x \otimes a + x \otimes ab + \sum b_1 x \otimes a b_2 + \sum a_1 x \otimes a_2 + \sum a_1 x \otimes a_2b + \sum a_1 b_1 x \otimes a_2 b_2$$

to be zero either the term $x \otimes \_$ or $\_ \otimes a$ must be zero. The former is $a + ab$. If $a+ab = 0$, then $a = 0$, since  $(1+b)$ is invertible. The latter is 
$$x + \sum_{\textrm{if } a_2 = a} a_1x + \sum_{\textrm{if } a_2b = a}  a_1 x + \sum_{\textrm{if } a_2b_2 = a} a_1b_1x.$$

In any case, this is of the form $x(1+m)$ where $m \in \mathfrak{m}$, which is zero if and only if $x = 0$, this contradicts the fact that $x$ generates $M$ over $B$. 

Let $\{x_1, \ldots, x_k \}$ be a minimal set of generators for $M$ over $C$. If $M$ is not free over $C$, there must exist a relation of the form 
$$a_1 x_1 + \cdots + a_k x_k = 0. $$

By minimality, $a_i \in \mathfrak{m}$. Let $M' = M/(x_1)$; then $a_2 x_2 + \cdots + a_k x_k = 0$ in $M'$. Since $\Delta_M(x) = (x\otimes 1)(1 \otimes 1 + 1\otimes b + \sum b_1 \otimes b_2)$, the map $\Delta_M$ restricts to $M'$ and  has the same form as before, therefore, by induction, $a_2 = \cdots = a_k = 0$. Similarly, we can deduce $a_1 = 0$ by looking at $M/(x_2)$. Therefore $M$ is free over $C$ as desired.

\end{proof}

\begin{lemma}\label{Afree}
Let $A$ be the coordinate ring of a gr-group variety. Then $A$ is a graded free $A_0$-module.
\end{lemma}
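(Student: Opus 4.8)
The plan is to prove freeness one graded piece at a time and then assemble. Since $A_0$ lives in degree $0$, a graded $A_0$-module is graded free precisely when each of its homogeneous components is a free $A_0$-module, so it suffices to show that every $A_n$ is free over $A_0$. Because $A$ is of finite type, each $A_n$ is finite dimensional over $\kf$ and hence finitely generated over $A_0$; and because $A$ is positively graded and gr-local, $A_0$ is a (finite dimensional) local ring with maximal ideal $\mathfrak{m}$. This puts us in position to invoke Lemma \ref{freeind} with $C = A_0$ and $M = A_n$, provided each $A_n$ can be equipped with a coaction of the required shape. Note that the Hopf structure really is needed here: a positively graded finite-type algebra with $A_0$ local need not be $A_0$-free, so the argument must use $\Delta$.

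The coaction will come from the coproduct together with the projection $\pi\colon A\to A_0$ that annihilates all strictly positive degrees. Since $\Delta$ preserves total degree, a short check shows $\pi$ is a Hopf algebra map (a product or coproduct of homogeneous elements lands in degree $0$ only when every factor does, which gives both multiplicativity and $(\pi\otimes\pi)\Delta = \Delta_{A_0}\pi$). Define $\Delta_M = (\mathrm{id}\otimes\pi)\circ\Delta$ and restrict it to $A_n$; the first leg then has degree $n$ and the second degree $0$, so $\Delta_M\colon A_n\to A_n\otimes A_0$. The first hypothesis of Lemma \ref{freeind} is then immediate: for $a\in A_0$ one has $(\mathrm{id}\otimes\pi)\Delta(a)=\Delta(a)$ since $\Delta(a)\in A_0\otimes A_0$ and $\pi$ is the identity on $A_0$, whence $\Delta_M(ax)=(\mathrm{id}\otimes\pi)\bigl(\Delta(a)\Delta(x)\bigr)=\Delta(a)\,\Delta_M(x)$ because $\Delta$ and $\pi$ are algebra maps. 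For the comultiplication of $C=A_0$ itself, the computation in the proof of Theorem \ref{antipode} shows that for $m\in\mathfrak{m}$ one has $\Delta(m)=m\otimes 1+1\otimes m+\sum m_1\otimes m_2$ with $m_i\in\mathfrak{m}$, which is exactly the form of $\Delta$ demanded in Lemma \ref{freeind}.

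It remains to check the shape of $\Delta_M(x)$, and this is where the main work lies. Using the description of $\Delta$ on a nonzero-degree element recorded in the proof of Theorem \ref{antipode}, namely $\Delta(x)=x\otimes 1+1\otimes x+\sum a_1\otimes m_2+\sum m_1\otimes a_2+\sum b_1\otimes b_2$ with $m_i\in\mathfrak{m}$, $|a_i|=|x|$ and $|b_i|<|x|$, applying $\mathrm{id}\otimes\pi$ kills every term whose second leg has positive degree; what survives is $\Delta_M(x)=x\otimes 1+\sum a_1\otimes m_2$, so that $\Delta_M(x)\equiv x\otimes 1\pmod{A_n\otimes\mathfrak{m}}$. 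This is the congruence the argument of Lemma \ref{freeind} exploits, and my plan is to feed it into that lemma to conclude that each $A_n$ is $A_0$-free; summing over $n$ then yields that $A$ is graded free over $A_0$.

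I expect the genuine obstacle to be reconciling the coaction produced above with the precise expression $\Delta_M(x)=x\otimes 1+x\otimes b+\sum b_1x\otimes b_2$ written in Lemma \ref{freeind}: the surviving terms $\sum a_1\otimes m_2$ need not have first legs literally proportional to $x$. The way I intend to close this is to observe that only the weaker statement $\Delta_M(x)\equiv x\otimes 1\pmod{A_n\otimes\mathfrak{m}}$, together with the $A_0$-linearity of $\Delta_M$, is actually used in the induction of Lemma \ref{freeind}; for a cyclic module the two conditions coincide (every first leg already lies in $A_0x$), and the multi-generator case reduces to the cyclic one by quotienting by generators exactly as in that proof, the congruence being preserved under the quotient. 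As a conceptual check, the whole statement is the graded incarnation of the fundamental theorem of Hopf modules applied to the Hopf subalgebra $A_0\subseteq A$: via multiplication and the coaction $(\mathrm{id}\otimes\pi)\Delta$, $A$ is a right $A_0$-Hopf module whose coinvariants are $\kappa(A)$, forcing $A\cong\kappa(A)\otimes A_0$ as $A_0$-modules and recovering the isomorphism recorded after Definition \ref{kappadef}.
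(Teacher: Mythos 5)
Your proposal follows the same route as the paper: restrict the coproduct to a coaction $A_n\to A_n\otimes A_0$ and feed it into Lemma \ref{freeind}. You are in fact more careful than the paper at the one delicate point. The paper's proof simply asserts that for $x\in A_n$ one has $\Delta(x)=x\otimes 1+x\otimes b+\sum b_1x\otimes b_2+\sum c_1\otimes c_2$ with $b,b_1,b_2\in\mathfrak{m}$, i.e.\ that the terms whose first leg has degree $n$ are $A_0$-multiples of $x$; this is not what the computation in Theorem \ref{antipode} actually gives (there the degree-$n$ first legs are arbitrary elements $a_1\in A_n$), and you correctly isolate this as the gap between the coaction $(\mathrm{id}\otimes\pi)\Delta$ you construct and the literal hypothesis of Lemma \ref{freeind}.

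Your proposed repair is only partly right, though. For a cyclic module the two conditions do coincide, as you say: write the coefficient $c$ of a first leg $cx$ as $\lambda+m$ with $\lambda\in\kf$ and $m\in\mathfrak{m}$, using $A_0/\mathfrak{m}\cong\kf$. But the reduction of the multi-generator case ``by quotienting by generators exactly as in that proof'' does not go through with only the congruence $\Delta_M(x)\equiv x\otimes 1\pmod{M\otimes\mathfrak{m}}$: to induct on $M'=M/Cx_1$ you need $\Delta_M(Cx_1)\subseteq Cx_1\otimes C$, and that is precisely what fails when the first legs of $\Delta_M(x_1)$ are not multiples of $x_1$. The clean finish is to compose $\Delta_M$ with the reduction $M\otimes C\to (M/\mathfrak{m}M)\otimes C$: applying this to a minimal relation $\sum a_ix_i=0$ with $a_i\in\mathfrak{m}$ and extracting the coefficient of each basis vector $\bar x_i$ of $M/\mathfrak{m}M$ produces a $k\times k$ linear system whose matrix lies in $I+M_k(\mathfrak{m})$, hence is invertible, forcing all $a_i=0$ --- the same determinant trick used twice in the proof of Theorem \ref{antipode}. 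Alternatively, your closing observation is the genuinely robust argument: $A$ is an $A_0$-Hopf module via multiplication and $(\mathrm{id}\otimes\pi)\Delta$ (coassociativity and counitality hold because $\pi$ is a Hopf algebra map and $\e\pi=\e$), and the fundamental theorem of Hopf modules gives $A\cong A^{\mathrm{co}A_0}\otimes A_0$ as graded $A_0$-modules. Either of these closes the proof; the quotienting step as you stated it does not.
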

\begin{proof}
Let $(A_0,\mathfrak{m})$ be local. Note that since $A$ is a positively graded Hopf algebra then each $A_n$ is an $A_0$-module. Also, since $A$ is a positively graded Hopf algebra $A_0$ is a sub-Hopf algebra of $A$.  

Consider $A_n$, as computed in the proof of \ref{antipode}, for $a \in A_0$ and $x \in A_n$ we have that,
$$
\Delta(a) = a \otimes 1 + 1 \otimes a + \sum a_1 \otimes a_2,
$$
where $a_1, a_2 \in \mathfrak{m}$. Similarly
$$
\Delta(x) = x \otimes 1+x \otimes b + \sum b_1 x \otimes b_2 + \sum c_1 \otimes c_2,
$$
where $|c_i| < n$ and $b, b_1, b_2 \in \mathfrak{m}$.

When restricting $\Delta$ to $\Delta_{A_n}: A_n \to A_n \otimes A_0$, $A_n$ is an $A_0$-module satisfying the conditions in \ref{freeind}. Therefore $A$ is a free $A_0$-module as desired. 
\end{proof}

\begin{theorem}\label{kappa}
Let $A$ be a coordinate ring of a gr-group variety, then $$A \cong \kappa(A) \otimes A_0$$ as graded algebras.
\end{theorem}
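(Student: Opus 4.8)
The plan is to promote the graded $A_0$-module isomorphism underlying Lemma \ref{Afree} to an isomorphism of graded algebras. Write $(A_0,\mathfrak{m})$ for the local ring $A_0$, so that $\kappa(A)=A\otimes_{A_0}\kf=A/\mathfrak{m}A$, and let $\pi\colon A\to\kappa(A)$ be the projection and $\iota\colon A_0\hookrightarrow A$ the inclusion. Since every element of $A_0$ sits in degree $0$, its Koszul sign against anything is trivial, so $A_0$ is a \emph{central} subalgebra of $A$. By Lemma \ref{Afree}, $A$ is graded free over $A_0$; combined with a graded Nakayama argument this gives the reduction I will use repeatedly: any homogeneous family in $A$ reducing modulo $\mathfrak{m}A$ to a $\kf$-basis of $\kappa(A)$ is automatically an $A_0$-basis of $A$. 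Thus bijectivity will come for free, and the entire content is to produce a lift of $\kappa(A)$ into $A$ that is \emph{multiplicative}.

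Concretely, the key step is to construct a graded $\kf$-algebra section $s\colon\kappa(A)\to A$ of $\pi$. Granting such an $s$, I would define
$$
\Phi\colon \kappa(A)\otimes A_0\longrightarrow A,\qquad \Phi(\bar b\otimes a)=s(\bar b)\,a .
$$
Because $A_0$ is central (so the degree-$0$ factor slides past everything without signs) and $s$ is an algebra map, $\Phi$ is a homomorphism of graded algebras: $\Phi((\bar b\otimes a)(\bar b'\otimes a'))=s(\bar b)s(\bar b')aa'=\Phi(\bar b\otimes a)\Phi(\bar b'\otimes a')$. Moreover $\Phi$ is $A_0$-linear and carries the $A_0$-basis $\{\bar b_\lambda\otimes 1\}$ of $\kappa(A)\otimes A_0$ to $\{s(\bar b_\lambda)\}$, a family lifting a $\kf$-basis of $\kappa(A)$; by the reduction above this is an $A_0$-basis of $A$, so $\Phi$ is bijective and hence the desired graded algebra isomorphism.

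Everything therefore hinges on building the section $s$, and I would do this through the structure theorem for connected graded Hopf algebras of finite type (\cite[7.8]{MM}): $\kappa(A)$ is a tensor product of monogenic factors $\kf[x]^{gr}$, $\Lambda(x)$, and, in characteristic $p$, truncated ones $\kf[x]^{gr}/(x^{p^r})$. I pick homogeneous generators and lift each one to $A$. For the polynomial and exterior factors there is no obstruction: their only relations are graded-commutativity (and $x^2=0$ in odd degree), which hold automatically in the graded-commutative algebra $A$, so these lifts extend to an algebra map without further choices.

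The genuine obstacle, which I expect to absorb most of the work, is the truncated factors in characteristic $p$. A lift $\tilde x$ of a generator $x$ with $x^{p^r}=0$ only satisfies $\tilde x^{p^r}\in\mathfrak{m}A$ a priori; in the monogenic case the positive grading forces $A$ to be concentrated in the degrees of $\kappa(A)$, so $\tilde x^{p^r}$ lands in a zero graded piece and vanishes, but with several generators $\tilde x^{p^r}$ may instead be an $\mathfrak{m}$-combination of the other degree-$p^r|x|$ monomials. Here the plain algebra structure is not enough, and I would invoke the Hopf structure: choosing the lift $\tilde x$ to be primitive (or with controlled coproduct of the shape produced in \ref{antipode}) makes $\tilde x^{p^r}$ primitive by the Frobenius identity, and I would argue that a primitive element of $\mathfrak{m}A$ in that degree must vanish, correcting the lifts one degree at a time. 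Proving that these corrections can always be made, compatibly across all generators, is the crux; the positively graded, finite-type, gr-local hypotheses—via the freeness of Lemma \ref{Afree}—are exactly what keep each homogeneous piece finite over $A_0$ and let the induction terminate.
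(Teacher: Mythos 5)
Your diagnosis of where the difficulty lies is sharper than the paper's own treatment: the paper proves Theorem \ref{kappa} in a single line (``the theorem follows from Lemma \ref{Afree}''), passing directly from graded freeness of $A$ over $A_0$ to the algebra isomorphism, whereas you correctly observe that freeness only yields $A\cong\kappa(A)\otimes A_0$ as graded $A_0$-modules: a homogeneous $A_0$-basis $\{b_\lambda\}$ lifting a $\kf$-basis of $\kappa(A)$ has structure constants in $A_0$ rather than in $\kf$, so its $\kf$-span need not be a subalgebra. Your reduction of the algebra statement to the existence of a graded algebra section $s\colon\kappa(A)\to A$ of $\pi$, and the verification via graded Nakayama that such an $s$ forces $\Phi$ to be bijective, are both correct, as is the observation that polynomial and exterior generators lift without obstruction.

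However, the proposal has a genuine gap exactly where you place the crux, and the mechanism you sketch for closing it does not work as stated. First, the generators supplied by the structure theorem \cite[7.8]{MM} for $\kappa(A)$ are only algebra generators and need not be primitive (compare $\xi_2$ in Example \ref{A1}, with $\Delta(\xi_2)=\xi_2\otimes 1+\xi_1^2\otimes\xi_1+1\otimes\xi_2$), so ``choosing the lift $\tilde x$ to be primitive'' is not available in general, and without primitivity the Frobenius identity you invoke for $\Delta(\tilde x^{p^r})$ fails because of cross terms in $(\tilde x\otimes 1+1\otimes\tilde x+\cdots)^{p^r}$. Second, even for a genuinely primitive lift, the assertion that a primitive element of $\mathfrak{m}A$ in degree $p^r|x|$ must vanish is left unproved, and it is essentially equivalent to the splitting being established: it is visible once one knows $A\cong A_0\otimes\kappa(A)$ as Hopf algebras, but cannot be assumed beforehand. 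An independent argument, or a different correction scheme for the lifts, is needed here and none is supplied. Finally, routing the construction through Proposition \ref{grcon} imports a perfectness hypothesis on $\kf$ that Theorem \ref{kappa} does not carry, so even a completed version of your argument would prove a narrower statement than the one asserted.
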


\begin{proof}
The theorem follows from Lemma \ref{Afree}.
\end{proof}

\subsection{Structure of graded group varieties}

We now classify the graded algebra structure of the coordinate rings of gr-group varieties over perfect fields of characteristic $p > 0$. For this we need to recall some known results from \cite{MM} and \cite{W}. 

\begin{proposition}(From \cite[7.8]{MM})

Let $A$ be a gr-commutative Hopf algebra over a field $\kf$ of characteristic  $p >0$, generated by one positively graded element $x$, then $A \cong \kf[x]^{gr}$, or $A \cong \frac{\kf[x]^{gr}}{(x^{p^l})}.$

\end{proposition}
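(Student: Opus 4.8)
The plan is to prove the structure of a gr-commutative Hopf algebra $A$ generated by a single positively graded element $x$ by classifying the possible relations $x$ can satisfy, then identifying the comultiplication. First I would observe that, since $A$ is generated by $x$ as an algebra, $A$ is spanned by the powers $x^0, x^1, x^2, \ldots$, so $A$ is a quotient of the graded polynomial ring $\kf[x]^{gr}$. The entire question is therefore to determine the kernel $I$ of the surjection $\kf[x]^{gr} \twoheadrightarrow A$, an ideal that is homogeneous and must be compatible with the Hopf structure. Either $I = 0$, giving $A \cong \kf[x]^{gr}$, or $I \neq 0$, in which case I must show $I$ is generated by a single monomial $x^{p^l}$.

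The heart of the argument is the case $I \neq 0$. Here I would take the smallest $n$ with $x^n \in I$, so that $1, x, \ldots, x^{n-1}$ are linearly independent and $x^n = 0$ in $A$. The key step is to use the fact that $x$ is \emph{primitive}-like: since $A$ is generated by $x$ in a single positive degree, I would argue (as in the reductions carried out in the proof of Theorem \ref{antipode}) that one may normalize the comultiplication so that $\Delta(x) = x \otimes 1 + 1 \otimes x$, i.e. $x$ is primitive. Then I would compute $\Delta(x^n) = \sum_{i=0}^{n} \binom{n}{i} x^i \otimes x^{n-i}$ using graded commutativity (with the appropriate signs, which vanish or simplify because $x^n = 0$ forces attention to characteristic $p$). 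Requiring $\Delta(x^n) = 0$ in $A \otimes A$ forces $\binom{n}{i} \equiv 0 \pmod p$ for all $0 < i < n$, and the standard number-theoretic lemma (Kummer/Lucas) says this happens precisely when $n$ is a power of $p$. This pins down $n = p^l$ and yields $A \cong \kf[x]^{gr}/(x^{p^l})$.

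The main obstacle I anticipate is the bookkeeping of signs from graded commutativity when $|x|$ is odd and $\ch(\kf) \neq 2$. In that regime $x^2 = 0$ automatically, so the only graded polynomial ring on one odd generator is already $\kf[x]^{gr} = \Lambda(x)$ with $x^2 = 0$; this is consistent with $p^l$ being forced, but one must handle the degenerate exterior case separately rather than blithely applying the binomial computation. Conversely when $|x|$ is even (or $p = 2$), the element behaves like an ordinary polynomial generator and the binomial argument runs cleanly. I would therefore split into the cases $|x|$ even versus $|x|$ odd, noting that for odd $|x|$ with $p > 2$ the graded commutativity already collapses $A$ to the exterior algebra, while for even $|x|$ the computation above applies verbatim.

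Finally, I would need to confirm that the candidate relation $x^{p^l}=0$ is genuinely \emph{compatible} with the Hopf structure, i.e. that $\kf[x]^{gr}/(x^{p^l})$ really is a sub/quotient Hopf algebra — but this is exactly the content of the earlier examples (see Example \ref{A1} and the preceding one), where $\Delta(t) = t \otimes 1 + 1 \otimes t$ descends to $\kf[t]^{gr}/(t^{p^r})$ precisely because the cross terms $\binom{p^r}{i}$ vanish mod $p$. So the verification direction reuses the same binomial divisibility fact, closing the classification.
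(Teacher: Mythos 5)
The paper offers no proof of this proposition: it is imported verbatim from \cite[7.8]{MM}, so there is no in-paper argument to compare yours against. Taken on its own terms, your argument is the standard one and is essentially correct, but two points deserve tightening. First, primitivity of $x$ is not a ``normalization'' you get to choose: because $A$ is generated by the single homogeneous element $x$ of degree $d>0$, one has $A_0=\kf$ and $A_j=0$ for $0<j<d$, so $(A\otimes A)_d=\kf(x\otimes 1)\oplus\kf(1\otimes x)$, and the counit axiom (together with $\e(x)=0$, forced by degree) gives $\Delta(x)=x\otimes 1+1\otimes x$ on the nose; this is cleaner than appealing to the reductions in the proof of Theorem \ref{antipode}, which only yield primitivity modulo lower-degree cross terms. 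Second, you should record why the kernel $I$ of $\kf[x]^{gr}\twoheadrightarrow A$ is exactly $(x^n)$ for the minimal $n$ with $x^n\in I$: each graded piece of $\kf[x]^{gr}$ is one-dimensional, spanned by a power of $x$, so a homogeneous ideal is generated by the powers of $x$ it contains. With those observations in place, the classes of $x^i\otimes x^{n-i}$ for $0<i<n$ are linearly independent modulo $I\otimes\kf[x]^{gr}+\kf[x]^{gr}\otimes I$, so requiring $\Delta$ to descend forces $\binom{n}{i}\equiv 0\pmod p$ for all $0<i<n$ and hence $n=p^l$ by Kummer/Lucas; and your case split (odd $|x|$ with $p>2$ collapsing $\kf[x]^{gr}$ to the exterior algebra so that no proper quotient arises, versus even $|x|$ or $p=2$ where the sign-free binomial expansion applies) is exactly the right way to handle the graded commutativity.
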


\begin{proposition}(From \cite[14.4]{W}) \label{inf}
Let $G$ be an infinitesimal group scheme over a perfect field $\kf$ of characteristic $p > 0$, then its coordinate ring, $\kf[G]$, is isomorphic, as an algebra, to an algebra of the form:
$$\kf[G] \cong \frac{\kf[t_1, \ldots, t_n]}{(t_1^{p^{r_1}}, \ldots, t_n^{p^{r_n}})}.$$
\end{proposition}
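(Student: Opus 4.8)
The plan is to reconstruct the classical argument, working entirely with the coordinate Hopf algebra. Write $A=\kf[G]$. Because $G$ is infinitesimal, $A$ is a finite-dimensional commutative Hopf algebra that is local, with maximal ideal equal to the augmentation ideal $\mathfrak m=\ker(\e)$ and residue field $A/\mathfrak m\cong\kf$. The goal is to produce generators $t_1,\dots,t_n$ of $A$ lifting a basis of the cotangent space $\mathfrak m/\mathfrak m^2$ (so $n=\dim_\kf \mathfrak m/\mathfrak m^2$, automatically a minimal generating set by Nakayama) together with integers $r_i$ such that $t_i^{p^{r_i}}=0$ and the monomials $\prod_i t_i^{e_i}$ with $0\le e_i<p^{r_i}$ form a $\kf$-basis; this is exactly the asserted isomorphism.

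The main engine is induction on $\dim_\kf A$ through the Frobenius subalgebra. Since $\ch(\kf)=p$ and $\kf$ is perfect, the set of $p$-th powers $B:=A^{(p)}=\{a^p:a\in A\}$ is a $\kf$-subalgebra (it contains $\kf=\kf^p$ and is closed under the operations because Frobenius is an additive ring map in characteristic $p$), and it is even a sub-Hopf-algebra since $\Delta(a^p)=\Delta(a)^p\in B\otimes B$. It is local, and of strictly smaller dimension than $A$ whenever $A\ne\kf$, because Frobenius has nontrivial kernel on the nilpotent ideal $\mathfrak m$. The two facts driving the step are: first, that $A$ is \emph{free} as a $B$-module, which for commutative Hopf algebras is the faithful flatness of a quotient map, cf. \cite{W}, combined with flat $=$ free over the local ring $B$; and second, that the quotient $\bar A:=A\otimes_B\kf=A/B^+A$ is the coordinate ring of the Frobenius kernel, a height-one Hopf algebra in which every element of the augmentation ideal $\bar{\mathfrak m}$ has vanishing $p$-th power. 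Note that $B^+\subseteq\mathfrak m^2$ (as $a^p\in\mathfrak m^p\subseteq\mathfrak m^2$ for $a\in\mathfrak m$), so $\mathfrak m/\mathfrak m^2\cong\bar{\mathfrak m}/\bar{\mathfrak m}^2$ and the number of generators is unchanged at each stage.

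For the height-one piece I would argue directly: filter $\bar A$ by powers of $\bar{\mathfrak m}$ and pass to $\mathrm{gr}\,\bar A=\bigoplus_k \bar{\mathfrak m}^k/\bar{\mathfrak m}^{k+1}$, a finite-dimensional connected graded commutative Hopf algebra (the filtration is a Hopf filtration since $\bar{\mathfrak m}$ is a Hopf ideal). Over a perfect field of characteristic $p$, Borel's structure theorem (the tensor-product form of the monogenic classification \cite[7.8]{MM}; no polynomial or exterior factors occur, as $\bar A$ is finite-dimensional and genuinely commutative) gives $\mathrm{gr}\,\bar A\cong \kf[x_1,\dots,x_n]/(x_i^{p^{s_i}})$ as algebras, and height one forces each $s_i=1$. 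Lifting the $x_i$ to $u_i\in\bar{\mathfrak m}$ causes no trouble here precisely because the relations $u_i^p=0$ hold on the nose, so $\bar A\cong\kf[u_1,\dots,u_n]/(u_i^p)$. I then assemble the layers: lift the $u_i$ to $t_i\in\mathfrak m$, which generate $A$; each $t_i^p\in B$, and since the $t_i^p$ span the cotangent space $B^+/(B^+)^2$ they generate $B$. Feeding them into the inductive truncated-polynomial description of $B$ produces exponents $r_i\ge1$ with $t_i^{p^{r_i}}=(t_i^{p})^{p^{r_i-1}}=0$ holding exactly in $B\subseteq A$. A dimension count via freeness, $\dim_\kf A=\dim_\kf B\cdot\dim_\kf\bar A=\prod_i p^{r_i}$, then upgrades the evident surjection $\kf[T_1,\dots,T_n]/(T_i^{p^{r_i}})\twoheadrightarrow A$ to an isomorphism.

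The step I expect to be the main obstacle is this final bookkeeping: one must check that the $p$-th powers $t_i^p$ genuinely generate $A^{(p)}$ and that its inductive exponents combine with the height-one data to give relations $t_i^{p^{r_i}}=0$ valid in $A$ itself, not merely in an associated graded. This is exactly where a naive attempt to lift Borel's theorem from $\mathrm{gr}\,A$ to $A$ breaks down: correcting a lift $t_i$ by a term $c\in\mathfrak m^2$ changes $t_i^{p^{r_i}}$ by $c^{p^{r_i}}$, which sits in far higher filtration than the error one wishes to cancel, so the error cannot in general be removed term by term. Routing the argument through the Frobenius tower and the freeness of $A$ over $A^{(p)}$ sidesteps this, since there the truncation relations are \emph{inherited} exactly rather than reconstructed. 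Throughout, perfectness of $\kf$ is used repeatedly: to make $A^{(p)}$ a $\kf$-algebra, to obtain the clean truncated form in Borel's theorem, and to absorb the Frobenius-semilinearity of scalars.
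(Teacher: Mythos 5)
The paper gives no proof of this proposition: it is quoted as a known result from \cite[14.4]{W}, so there is no internal argument to compare against. Your reconstruction is, in substance, the standard (Waterhouse) proof: induction through the Frobenius subalgebra $A^{(p)}$ using perfectness, freeness of $A$ over this Hopf subalgebra, a separate treatment of the height-one quotient via the associated graded of the $\mathfrak m$-adic filtration, and reassembly by a dimension count. The strategy is correct, and you rightly identify why a naive lift of Borel's theorem from $\mathrm{gr}\,A$ to $A$ fails. The one place the write-up stays genuinely informal is the final assembly, which you flag yourself: to conclude $\dim_\kf B=\prod_i p^{r_i-1}$ you need two things that are asserted rather than proved. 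First, the elements $t_i^p$ are only shown to \emph{span} $B^+/(B^+)^2$ (via $m^p\equiv\sum\lambda_i^p t_i^p$ modulo $(B^+)^2$); the nonzero ones need not be linearly independent there, and one must correct the lifts, e.g.\ replacing $t_i$ by $t_i-\sum_j\lambda_{ij}^{1/p}t_j$ (perfectness again), so that the surviving $t_i^p$ form part of a minimal generating set of $B$. Second, the induction hypothesis must be carried in the strengthened form ``for \emph{any} minimal system of generators, the monomials with exponents below the minimal vanishing powers form a basis,'' not merely ``there exists a presentation of truncated-polynomial type''; otherwise the exponents produced for $B$ cannot be matched against the specific elements $t_i^p$. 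Both points are handled in the cited source and are routine once stated, so I would call this a correct proof outline with one step left at the level of a sketch rather than a wrong approach.
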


\begin{proposition}(From \cite[7.11]{MM}) \label{grcon} 
Let $G$ be a positive, algebraically connected, gr-group scheme over a perfect field of characteristic $p$.  If its coordinate ring, $\kf[G]$, is of finite type, then is it is isomorphic, as an algebra, to an algebra of the form below. 

\begin{enumerate}

\item If $p > 2$ then, 
$$\kf[G] \cong
		\frac{\kf[x_1, \ldots, x_n, y_1, \ldots, y_m]^{gr}}{(x_1^{p^{l_1}}, \ldots, x_n^{p^{l_n}})}, $$
		where $|x_i|$ are even, $|y_i|$ are even or odd (of nonzero degrees). 
		
\item If $p = 2$ then,
$$\kf[G] \cong
	\frac{\kf[x_1, \ldots, x_n, y_1, \ldots, y_m]^{gr}}{(x_1^{2^{l_1}}, \ldots, x_n^{2^{l_n}})},$$
		where $|x_i|$ and $|y_i|$ are nonzero, and even or odd. 	
\end{enumerate}
\end{proposition}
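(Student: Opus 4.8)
The plan is to deduce this classification from the Borel structure theorem for connected graded-commutative Hopf algebras together with the monogenic classification recalled in the preceding proposition. Since $G$ is algebraically connected, $A = \kf[G]$ has $A_0 = \kf$, so $A$ is a connected, positively graded, graded-commutative Hopf algebra of finite type over the perfect field $\kf$ of characteristic $p$. Because the conclusion asserts only an isomorphism of algebras, I only need to match the underlying graded-algebra structure of $A$, not its comultiplication.

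First I would establish a decomposition $A \cong \bigotimes_i B_i$ as graded algebras, where each $B_i$ is a monogenic sub-Hopf algebra generated by one homogeneous element. I would argue by induction on the graded dimension, working upward degree by degree: at each stage choose a homogeneous indecomposable $x$ of least positive degree, let $\langle x\rangle$ be the sub-Hopf algebra it generates, and split off $A \cong \langle x\rangle \otimes A'$ with $A'$ again connected, positively graded, and of finite type. Graded-commutativity controls the squares of the generators, and perfectness of $\kf$ --- so that the Frobenius $p$-th power map admits $p$-th roots --- is what guarantees that $\langle x\rangle$ occurs as an honest algebra tensor factor. Finite type ensures the process terminates.

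Next I would apply the monogenic classification from the preceding proposition: each $B_i$ is either a free graded-polynomial algebra $\kf[x]^{gr}$ or a truncation $\kf[x]^{gr}/(x^{p^l})$. Collecting the truncated factors as the $x_i$ (with relations $x_i^{p^{l_i}} = 0$) and the untruncated ones as the $y_i$ gives precisely the displayed algebra. The remaining step is the degree bookkeeping separating the two cases. When $p > 2$, an odd-degree generator $y$ already satisfies $y^2 = 0$ by graded-commutativity, so $\kf[y]^{gr}$ is exterior and admits no further truncation; hence every genuinely truncated generator $x_i$ must be even, which is the stated constraint. When $p = 2$ there are no sign obstructions, each generator is an ordinary commutative variable, and truncations may occur in any nonzero degree, even or odd, yielding the second form.

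The main obstacle is the splitting step inside the decomposition: proving that the monogenic $\langle x\rangle$ is a tensor factor of $A$ as an algebra, rather than merely a sub-Hopf algebra. This is exactly where perfectness of $\kf$ enters, through the choice of $x$ and its iterated $p$-th powers so that the quotient lifts to an algebra complement; the $p$-th-power structure on graded-commutative algebras in characteristic $p$ is what makes such a lift available. Once this tensor decomposition is secured, the invocation of the monogenic case and the degree analysis are routine.
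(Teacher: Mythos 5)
Your outline is essentially the proof of the result the paper is quoting: Proposition \ref{grcon} is stated with a citation to \cite[7.11]{MM} (the Borel structure theorem) and carries no proof in the paper itself, and the Milnor--Moore argument is exactly your decomposition into monogenic sub-Hopf algebras via perfectness of $\kf$, followed by the monogenic classification and the parity bookkeeping. So the proposal is correct and takes the same route as the (cited) source; no divergence to report.
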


\begin{theorem}\label{grinf}
Let $G$ be gr-group variety over a perfect field of characteristic $p$. Then its coordinate ring, $\kf[G]$, is isomorphic as an algebra to an algebra of the form below. 

\begin{enumerate}

\item If $p > 2$, then 
$$\kf[G] \cong
		 \frac{\kf[x_1, \ldots, x_n, y_1, \ldots, y_m]^{gr}}{(x_1^{p^{l_1}}, \ldots, x_n^{p^{l_n}})}, $$
		where $|x_i|$ are even, $|y_i|$ are even or odd (including zero degrees). 
		
\item If $p = 2$, then
$$\kf[G] \cong
		\frac{\kf[x_1, \ldots, x_n, y_1, \ldots, y_m]^{gr}}{(x_1^{2^{l_1}}, \ldots, x_n^{2^{l_n}})},$$
		where $|x_i|$ is odd or even (including zero) and $|y_i|$ is even or odd and  nonzero.  	
		
\end{enumerate}
\end{theorem}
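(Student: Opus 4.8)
The plan is to reduce Theorem \ref{grinf} to Proposition \ref{grcon}, which classifies the coordinate rings of \emph{algebraically connected} gr-group varieties, by using the splitting $A \cong \kappa(A) \otimes A_0$ from Theorem \ref{kappa}. Since $G$ is a gr-group variety, $A = \kf[G]$ is positively graded, gr-local, and of finite type; its connectivization $\kappa(A)$ is an algebraically connected, positively graded Hopf algebra of finite type, hence falls under the hypotheses of Proposition \ref{grcon}. Thus $\kappa(A)$ is isomorphic as an algebra to a graded polynomial-times-truncated-polynomial ring on generators of \emph{nonzero} degree. The entire content of the theorem is then to understand the remaining tensor factor $A_0$ and show that multiplying it in contributes precisely the degree-zero generators (and the extra truncated even generators in the $p=2$ case), explaining the difference in the two statements: the phrase ``including zero degrees'' in \ref{grinf} versus its absence in \ref{grcon}.

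The first step is therefore to analyze $A_0$. Because $A$ is gr-local with $A_0$ local, and $A_0$ is itself a sub-Hopf algebra of $A$ (as noted in the proof of Lemma \ref{Afree}), $A_0$ is a local, finitely generated, commutative Hopf algebra over $\kf$ concentrated in degree zero. I would argue that $A_0$ is exactly the coordinate ring of an infinitesimal group scheme: being local and finitely generated over the (perfect) field $\kf$ with residue field $\kf$ forces its augmentation ideal to be nilpotent, so $\Spec A_0$ is infinitesimal. Then Proposition \ref{inf} applies verbatim and gives
$$
A_0 \cong \frac{\kf[t_1,\ldots,t_s]}{(t_1^{p^{r_1}},\ldots,t_s^{p^{r_s}})}
$$
as an algebra, a truncated polynomial ring on generators all sitting in degree zero. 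Note these degree-zero generators are automatically ``even'' in the graded-commutative sense, so in the $p>2$ case they join the $x_i$'s (the truncated even generators), and in the $p=2$ case they likewise join the $x_i$'s; this is the origin of the ``including zero'' clauses attached to the $x_i$ in both parts.

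The second step is to assemble the pieces. Tensoring the presentation of $\kappa(A)$ from Proposition \ref{grcon} with the presentation of $A_0$ from Proposition \ref{inf}, and using $A \cong \kappa(A)\otimes A_0$ from Theorem \ref{kappa}, I get a presentation of $A$ as a graded-polynomial ring on the combined generating set, modulo the $p$-power truncation relations coming from both factors. Relabeling, the even truncated generators of $\kappa(A)$ together with all the degree-zero generators of $A_0$ become the $x_i$'s (now allowed to have degree zero), the polynomial generators of $\kappa(A)$ become the $y_j$'s, and I must check the parity/degree bookkeeping matches each case exactly — in particular that in the $p>2$ case the $y_i$ may be even or odd of nonzero degree while in $p=2$ the $y_i$ are forced nonzero. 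Since tensor products of such presentations over $\kf$ combine generators and relations disjointly, this is a routine identification once the degree-zero generators are correctly slotted in.

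I expect the main obstacle to be the identification of $A_0$ as the coordinate ring of an infinitesimal group scheme so that Proposition \ref{inf} genuinely applies. One must verify that $A_0$, as an abstract Hopf algebra over $\kf$, is finite-dimensional (equivalently, that the augmentation ideal $\mathfrak{m}$ is nilpotent and $A_0$ is finitely generated as an algebra, hence Artinian local), which requires knowing that finite type plus gr-local forces the degree-zero piece to be finite-dimensional rather than merely finitely generated. If $A_0$ were only finitely generated but infinite-dimensional this step would fail, so the careful point is to confirm that local + finitely generated commutative Hopf algebra over a field with trivial residue extension is necessarily infinitesimal, invoking perfectness of $\kf$ to rule out inseparable field extensions in the residue ring. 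Once that is secured, the remainder is bookkeeping with tensor products and parities.
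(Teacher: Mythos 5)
Your proposal is correct and follows essentially the same route as the paper: decompose $A \cong \kappa(A) \otimes A_0$ via Theorem \ref{kappa}, apply Proposition \ref{grcon} to $\kappa(A)$ and Proposition \ref{inf} to $A_0$, and merge the presentations. The worry you flag about $A_0$ being genuinely infinitesimal is resolved immediately by the paper's convention that ``finite type'' means each graded piece is finite-dimensional, so $A_0$ is a finite-dimensional local Hopf algebra with residue field $\kf$ and Proposition \ref{inf} applies directly.
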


\begin{proof}
Given $G$ as above then $A = \kf[G]$ contains no nontrivial idempotent hence the degree zero part $A_0$ is a connected Hopf algebra in the ungraded sense, hence $A_0$ represents an infinitesimal group scheme. Then $A_0$ is as in \ref{inf}.  Consider, $\kappa(A) = A \otimes_{A_0} \kf$ as defined in \ref{kappadef}; then $\kappa(A)$ is as in \ref{grcon}.  By \ref{kappa}, $A \cong  \kappa(A) \otimes_{\kf} A_0$ as graded algebras, hence the result follows.  
\end{proof}

\section{Graded connected and graded separable components}\label{secgrcon}

Given a positive, gr-group scheme $G$, we define the gr-connected component $G^0$, and  the gr-group of connected components $\pi_0 G$ associated to it. 

\begin{definition}
Let $A = \kf[G]$, we define $\pi_0A$ to be the largest gr-separable subalgebra of $A$. By \ref{piA}, if $A$ is a positively graded $\kf$ algebra ($\kf$ not graded), then $\pi_0 A = \pi_0 A_0$ where $A_0$ is the degree zero part of $A$. Hence 
$$\pi_0G(R) = \Homg(\pi_0 A, R) = \Homa(\pi_0 A_0, R) = \Homa(\pi_0 A_0, R_0).$$ In fact, by gr-separable elements over an ungraded field are necessarily trivially graded.

The gr-group scheme corresponding to $\pi_0 A$ is denoted by $\pi_0G$ and called the \textit{gr-group of connected components}. (Graded separable extensions are defined in \ref{gr-separable}.)
\end{definition}

\begin{definition}
The inclusion $\pi_0A \subset A$ gives a map $G \to \pi_0G$. Let $G^0$ be the gr-group scheme corresponding to the kernel of this map, $G^0$ is the \textit{gr-connected component} of $G$. 
\end{definition}

\begin{theorem}\label{pides}
Let $G$ be a positive, gr-group scheme with coordinate ring $A = \kf[G]$. If $A_0$ is finite dimensional then, $\pi_0 A = \prod_i \kf_i^s$, where $\kf_i^s$ denotes the separable closure of $\kf$ in $\kf_i$.
\end{theorem}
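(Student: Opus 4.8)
The plan is to reduce the statement entirely to the finite-dimensional commutative algebra $A_0$ and then apply the structure theory of Artinian rings together with a Hensel-type lifting argument. First I would invoke the identity $\pi_0 A = \pi_0 A_0$ recorded in the definition preceding the theorem, so that it suffices to identify the largest separable subalgebra of $A_0$. Since $A$ is graded commutative, $A_0$ is genuinely commutative, and by hypothesis it is finite dimensional, hence Artinian. By the structure theorem for Artinian rings I would write $A_0 \cong \prod_i B_i$ as a finite product of local Artinian $\kf$-algebras $(B_i, \mathfrak{m}_i)$, corresponding to the primitive idempotents of $A_0$; in particular $A_0/\Rad(A_0) \cong \prod_i \kf_i$, where each $\kf_i = B_i/\mathfrak{m}_i$ is a finite field extension of $\kf$, and I set $\kf_i^s$ to be the separable closure of $\kf$ in $\kf_i$.

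Next I would establish the upper bound. A separable (equivalently \'etale) commutative $\kf$-algebra is reduced, so any separable subalgebra $S \subseteq A_0$ meets the nilradical $\Rad(A_0) = \prod_i \mathfrak{m}_i$ trivially and therefore injects into $A_0/\Rad(A_0) \cong \prod_i \kf_i$. For the image I would argue elementwise: any $s = (s_i)$ in a separable subalgebra generates a simple separable subalgebra $\kf[s] \cong \kf[x]/(f)$ with $f$ a separable polynomial, so each coordinate $s_i$ is a root of $f$ and hence lies in $\kf_i^s$. Thus every separable subalgebra of $A_0$ embeds into $\prod_i \kf_i^s$, giving $\pi_0 A_0 \hookrightarrow \prod_i \kf_i^s$.

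The reverse inclusion — that $\prod_i \kf_i^s$ is actually realized as a subalgebra of $A_0$ — is the heart of the matter and the step I expect to be the main obstacle, since it requires lifting each separable residue extension through the nilpotent surjection $B_i \to \kf_i$. Here I would use that a local Artinian ring is complete (as $\mathfrak{m}_i^N = 0$ for large $N$) and hence Henselian: writing $\kf_i^s = \kf(\alpha_i)$ by the primitive element theorem, the minimal polynomial of $\alpha_i$ is separable, so the simple root $\alpha_i \in \kf_i$ lifts uniquely to a root $\tilde\alpha_i \in B_i$ by Hensel's lemma, and $\kf(\tilde\alpha_i) \subseteq B_i$ is a separable subfield mapping isomorphically onto $\kf_i^s$. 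Taking the product over $i$ realizes $\prod_i \kf_i^s$ inside $A_0$ as a separable subalgebra. (Equivalently, one may appeal to the formal \'etaleness of separable algebras, which lift uniquely through the nilpotent extension $A_0 \to A_0/\Rad(A_0)$.)

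Finally I would combine the two inclusions: the lifted copy of $\prod_i \kf_i^s$ is separable and, by the upper-bound step, contains (the image of) every separable subalgebra of $A_0$, so it is the largest one. Hence $\pi_0 A = \pi_0 A_0 = \prod_i \kf_i^s$, as claimed.
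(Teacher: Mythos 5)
Your proposal is correct and follows essentially the same route as the paper: reduce to $A_0$ via $\pi_0 A = \pi_0 A_0$, split $A_0$ into local Artinian factors, bound any separable subalgebra above by $\prod_i \kf_i^s$ through the residue fields, and realize $\prod_i \kf_i^s$ inside $A_0$ by lifting. The only difference is presentational: where you carry out the Hensel/formal-\'etaleness lifting explicitly, the paper delegates that step to the citation $\pi_0 A_0 \cong \pi_0(A_0/\mathfrak{m})$ from Waterhouse.
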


\begin{proof}
Note that since $A_0$ is finite dimensional $A_0 = \prod_i A_0^i$ where each $A_0^i$ is a local ring. Then $\pi_0 A = \pi_0A_0$. Then $\pi_0A_0 = \prod_i \pi_0 A_0^i$. 

To see that, let $s \in A_0^i$ separable over $\kf$; then $(0, \ldots, 0, s, 0, \ldots)$ is in $A_0$ and separable over $\kf$; therefore $\prod_i \pi_0 A_0^i \subseteq \pi_0 A_0$. 

Let $s = (s_i) \in \pi_0 A_0$; then $s(0, \ldots, 0, 1, 0, \ldots, 0)  = s_i \in \pi_0A_0$ since $s$ and \phantom{-} $(0, \ldots, 0, 1, 0, \ldots, 0)$ are in $\pi_0 A_0$. Hence the claim is true. 
 
So now it is enough to show that if $(A_0, \mathfrak{m})$ is a finite local ring with residue field $\widetilde{\kf}$ then $\pi_0 A_0 = \widetilde{\kf}^s$ where $\widetilde{\kf}^s$ denotes the separable closure of $\kf$ in $\widetilde{\kf}$. By definition $\pi_0 A_0$ is a separable subalgebra of $A$. Hence by the classification of separable algebras, $\pi_0 A_0$ is a product of matrix rings over division rings whose centers are finite dimensional separable field extensions of $\kf$.  Since $A$ is commutative it follows that $\pi_0A_0$ is a product of separable field extensions of $\kf$.  Since $A_0$ is local and $\pi_0 A_0 \subset A$, then it follows that $\pi_0 A_0$ is exactly a separable field extension of $\kf$. Every element in $\pi_0 A_0 - \{0\}$ is invertible hence it survives under the quotient $A_0/\mathfrak{m} \cong \widetilde{\kf}$; hence $\pi_0 A_0$ is a separable field extension of $\kf$ contained in $\widetilde{\kf}$.  Thus $\pi_0 A_0 \subseteq \widetilde{\kf}^s$.

Now by \cite[6.8]{W} we have that if $(A_0, \mathfrak{m}, \widetilde{\kf})$ is finite dimensional local, then $\pi_0 A  = \pi_0A_0 \cong \pi_0(A_0/\mathfrak{m}) \cong \pi_0(\widetilde{\kf}) = \widetilde{\kf}^s$, hence $\pi_0 A= \widetilde{\kf}^s$. 
\end{proof}

When $G$ be a positive, finite, gr-group scheme, we have a more explicit description for $G_0$. Let $A = \kf[G]$, by \ref{hens} we write $A = \prod_{i=1}^n A^i$ where $(A^i, \mathfrak{m}_i)$ are gr-local rings
with residue gr-field $\kf_i$. Let $\kf \subset \kf_i^s \subset \kf_i$, where $\kf_i^s$ is the gr-subfield of $\kf_i$
 whose homogeneous elements are separable over $\kf$.  

For a finite gr-group scheme $G$ and $A=\prod_i A^i$ as above, let $e_i$ be the identity of $A^i$. 
The counit map $\e: A \to \kf$ sends all $e_i$ but one to $0$, say $e_0$. To see this, notice that $\e(1_A) = 1$ and $1_A = \sum_i e_i$. 
Let $\e(e_i) = \lambda_i \in \kf$; since the $e_i$ are idempotents it follow that 
$\e(e_i) = \e(e_i^2)$ thus $\lambda_i = \lambda_i^2$, which in the case of a field implies that $\lambda_i$ is either zero or one. 
Now since $\e(1_A) = \e( \sum_i e_i) = \sum_i \lambda_i = 1$  then exactly one of the $\lambda_i$, say $\lambda_0$ must be nonzero and hence equal to one. Then $\e$ factors through $A^0$, $\e:A \to A^0 \to \kf$.

Note that since $\e:A \to \kf$ factors through $A^0$ and $\e$ is surjective then $\e: A^0 \to \kf$ is surjective. We know that $\kf \subset \kf_0$ and since $\widehat{\e}: A^0/\mathfrak{m}_0  = \kf_0 \to \kf$ is surjective
it follows by Schur's Lemma that $\kf_0  = \kf$. Therefore  $A^0$ is a gr-local algebra with residue gr-field equal to $\kf$. 

\begin{theorem}\label{grcomp}
Let $G$ be a positive, finite, gr-group scheme. Then the coordinate ring for $G^0$ is $A^0$ as above. 
\end{theorem}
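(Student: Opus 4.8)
The plan is to identify $\kf[G^0]$ by computing the coordinate ring of the kernel directly and then recognizing it as $A^0$. By definition $G^0$ is the kernel of the morphism $G \to \pi_0 G$ induced by the inclusion $\pi_0 A \hookrightarrow A$. As in the ungraded theory, the kernel of a morphism of gr-group schemes arising from a graded Hopf algebra map $B \hookrightarrow A$ is represented by the base change of $A$ along the counit of $B$; that is,
$$\kf[G^0] \cong A \otimes_{\pi_0 A} \kf = A \big/ A\cdot\ker\!\big(\e|_{\pi_0 A}\big),$$
where $\kf$ is viewed as a $\pi_0 A$-algebra through $\e|_{\pi_0 A}$. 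The whole statement then reduces to evaluating this quotient.

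To evaluate it I would first invoke Theorem \ref{pides}, which gives $\pi_0 A \cong \prod_i \kf_i^s$, and note that the primitive idempotents of $\pi_0 A$ are precisely the factor idempotents $e_i$ of $A = \prod_i A^i$: each $e_i$ is separable, hence lies in the maximal separable subalgebra $\pi_0 A$, and the $e_i$ already split $\pi_0 A$ into its field factors. Using that $\kf_0 = \kf$ (so $\kf_0^s = \kf$) and that $\e$ factors as $A \to A^0 \to \kf$ through projection onto the $0$-component, the restriction $\e|_{\pi_0 A}\colon \prod_i \kf_i^s \to \kf$ is exactly the projection onto the $0$-th factor. In particular $\e(e_0) = 1$ and $\e(e_i) = 0$ for $i \neq 0$, so $\ker(\e|_{\pi_0 A})$ is the ideal of $\pi_0 A$ generated by $\{e_i : i \neq 0\}$.

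Pushing this ideal into $A$ gives $A\cdot\ker(\e|_{\pi_0 A}) = \sum_{i \neq 0} A e_i = \bigoplus_{i \neq 0} A^i$, and therefore
$$\kf[G^0] \cong A \Big/ \bigoplus_{i \neq 0} A^i \cong A^0,$$
as required. Since $\pi_0 A$ consists of trivially graded elements, the base change is taken over a degree-zero subalgebra, all the maps involved are graded, and the resulting isomorphism is one of graded algebras.

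The step I expect to be the main obstacle is the first one: justifying, in the graded context, that the coordinate ring of $G^0$ is the base change $A \otimes_{\pi_0 A} \kf$ along the counit, and checking that carrying this out over the trivially graded separable subalgebra $\pi_0 A$ keeps every structure map a genuine graded Hopf-algebra morphism. Once that identification is secured, the recognition of the idempotents and the quotient computation are purely formal.
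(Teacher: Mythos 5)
Your argument is correct and follows essentially the same route as the paper: both identify $\kf[G^0]$ with $A \otimes_{\pi_0 A} \kf = A/(\ker(\e|_{\pi_0 A}))A$ via the standard kernel representation, use Theorem \ref{pides} to recognize $\pi_0 A = \prod_i \kf_i^s$, observe that the restricted counit kills exactly the factors with $i \neq 0$, and conclude that the quotient is $A^0$. Your added remarks on why the idempotents $e_i$ lie in $\pi_0 A$ and why all maps stay graded are just slightly more explicit versions of steps the paper leaves implicit.
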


\begin{proof}
By \ref{pides} $\pi_0 A = \prod_i \kf_i^s$, the map $G \to \pi_0 G$ is given by the inclusion $\pi_0 A = \prod_i {\kf}_i^s \subset A$. By \cite[2.1]{W} the kernel of $G \to \pi_0 G$ is represented by  $A \otimes_{\pi_0 A} \kf = A /(I \cap \pi_0 A) A$ where $I = \ker \e$
is the augmentation ideal. We have that $\prod_{i \neq 0} {\kf}_i^s \subset I$. We then get that 
$I \cap \pi_0 A = \prod_{i \neq 0} {\kf}_i^s$. 
Therefore $A/(I \cap \pi_0 A)A = A/(\prod_{i \neq 0} {\kf}_i^s A) = (\prod_i A^i)/(\prod_{i \neq 0} {\kf}_i^s A^i) = \prod_i A^i/ \prod_{i \neq 0} A^i  \cong A^0$, which is the algebra that corresponds to $G^0$. Thus, $A^0$ is the coordinate ring for $G^0$.
\end{proof}

\subsection{The graded spectrum and connectivity}
The graded (prime) spectrum, denoted $\Specg(R)$, for a graded ring $R$, is defined and studied in Appendix \ref{local}. We give results regarding the graded spectrum of the coordinate ring of a positive, gr-group scheme.

\begin{proposition}\label{conn}
Let $G$ be a positive, gr-group scheme with $A = \kf[G]$ and $A_0$ finitely generated. Then $\pi_0 G$ is trivial if and only if $\Specg(A)$  is connected. 
\end{proposition}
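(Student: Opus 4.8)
The plan is to translate both conditions into the existence of a nontrivial idempotent in the degree-zero subalgebra $A_0$, and then to use the group (Hopf) structure on $A_0$ to control its largest separable subalgebra. Write $A = \kf[G]$, and recall that $\pi_0 A = \pi_0 A_0$ and that $\pi_0 G$ is trivial exactly when $\pi_0 A = \kf$. The first step I would carry out is the reduction on the geometric side: by the description of $\Specg$ in Appendix \ref{local}, connectedness of $\Specg(A)$ is equivalent to the absence of a nontrivial homogeneous idempotent in $A$, and a homogeneous idempotent of degree $d$ satisfies $2d = d$ and hence lies in degree $0$. Thus $\Specg(A)$ is connected if and only if $A_0$ has no nontrivial idempotent.

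For the implication ``$\pi_0 G$ trivial $\Rightarrow \Specg(A)$ connected'' I would argue contrapositively. A nontrivial idempotent $e \in A_0$ spans a subalgebra $\kf[e] \cong \kf[x]/(x^2 - x) \cong \kf \times \kf$, which is separable (\'{e}tale) over $\kf$; it is therefore contained in the largest separable subalgebra $\pi_0 A_0 = \pi_0 A$, forcing $\pi_0 A \neq \kf$ and so $\pi_0 G$ nontrivial. Hence if $\pi_0 G$ is trivial then $A_0$ has no nontrivial idempotent and $\Specg(A)$ is connected.

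For the converse, suppose $\Specg(A)$ is connected, so that $A_0$ has no nontrivial idempotent. Using that $A_0$ is finitely generated, $\pi_0 A_0$ is a finite \'{e}tale $\kf$-algebra, i.e.\ (by the classification of separable algebras invoked in \ref{pides}) a finite product of finite separable field extensions of $\kf$; since $\pi_0 A_0 \subseteq A_0$ inherits the absence of nontrivial idempotents, this product has a single factor, so $\pi_0 A_0 = L$ is a separable field extension of $\kf$. It remains to prove $L = \kf$, and this is the step I expect to be the real obstacle: mere connectedness of the spectrum forbids only idempotents, which by itself would still permit $\pi_0 A_0$ to be a nontrivial separable field extension. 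The resolution uses the identity section of the group, encoded by the counit: since $1 \in \pi_0 A_0$ and $\e \colon A_0 \to \kf$ is a $\kf$-algebra homomorphism, its restriction $\e \colon L \to \kf$ is surjective, and being a nonzero homomorphism out of a field it is injective, whence $L \cong \kf$. Therefore $\pi_0 A = \pi_0 A_0 = \kf$ and $\pi_0 G$ is trivial. Conceptually this last step is the algebraic shadow of the fact that a connected group scheme over a field is automatically geometrically connected, the rational identity point preventing the separable field extensions that would otherwise obstruct the equivalence.
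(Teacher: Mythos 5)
Your argument is correct, and it shares the paper's skeleton: both proofs first reduce to the degree-zero subalgebra (you via the observation that idempotents are concentrated in degree zero, the paper via \ref{piA} and \ref{grconcon}), after which the statement becomes the classical ungraded equivalence for the affine group scheme represented by $A_0$. The difference is what happens next: the paper simply cites \cite[6.6]{W} for ``$\pi_0$ trivial $\iff$ $\Spec(A_0)$ connected,'' whereas you unpack that citation and prove it. Your two halves are the right ones: a nontrivial idempotent generates a copy of $\kf\times\kf$ inside the largest separable subalgebra, and conversely, once absence of idempotents forces $\pi_0 A_0$ to be a single separable field extension $L$, the counit $\e|_L\colon L\to\kf$ is an injective (nonzero map out of a field) and surjective ($\e(1)=1$, $\kf$-linear) algebra map, so $L=\kf$. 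That counit step is exactly the group-theoretic input hidden in the Waterhouse citation, and you are right to single it out as the crux --- connectedness of the spectrum alone would not exclude a nontrivial separable field extension. Two small points to tidy: your ``$2d=d$'' remark only rules out \emph{homogeneous} idempotents of nonzero degree, so you should invoke \ref{idem} (all idempotents of a graded ring are homogeneous of degree zero) to get the full equivalence between connectedness of $\Specg(A)$ and absence of idempotents in $A_0$; and you implicitly use that $\pi_0 A_0$ contains \emph{every} separable subalgebra and is finite \'etale, which is where the hypothesis that $A_0$ is finitely generated enters and which the paper takes from the same source it cites. Your version buys self-containedness at the cost of reproving a standard lemma; the paper's buys brevity.
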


\begin{proof}
Since $G$ is positive, $\pi_0 A = \pi_0 A_0$. Then by \cite[6.6]{W} $\pi_0 G$ is trivial if and only if $\Spec(A_0)$ is connected and by \ref{grconcon}, $\Spec(A_0)$ is connected if and only if $\Specg(A)$ is connected.

\end{proof}

\begin{definition}\label{defcon}
A gr-group scheme $G$ is \textit{connected} if $\pi_0 G$ is trivial. By \ref{idem} $G$ is connected if and only if $\kf[G]$ contains no nontrivial idempotents. 
\end{definition}

\begin{rk}
If $G$ is algebraically connected, that is, the zero degree part of $\kf[G]$ is $\kf$ then $G$ is connected since $\kf[G]$ contains no nontrivial idempotents. 
\end{rk}

\begin{definition}
A finite gr-group $G$ is \textit{\'{e}tale} if $\kf[G]$ is separable. In this case, by \ref{piA} it follows that if $G$ is positive and \textit{\'{e}tale} then $\kf[G]$ must be trivially graded. 
\end{definition}

\section{Classification of finite graded group schemes}\label{secclas}

For the next result we can follow the proof of \cite[6.8]{W} to get the graded version. 

\begin{proposition}[From \cite{W}]
Let $G$ be a finite, positive, gr-group scheme over a perfect field. Then $G$ is the semi-direct product of $G^0$ and $\pi_0G$. 
\end{proposition}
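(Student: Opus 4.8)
The plan is to follow the ungraded argument of \cite[6.8]{W}: realize $G$ as an extension of $\pi_0 G$ by the normal subgroup $G^0$, split that extension, and read off the semidirect product. First I would record the short exact sequence
$$1 \longrightarrow G^0 \longrightarrow G \stackrel{\pi}{\longrightarrow} \pi_0 G \longrightarrow 1.$$
Here $\pi$ is induced by the inclusion $\pi_0 A \hookrightarrow A$, where $A = \kf[G]$. Since $\pi_0 A = \pi_0 A_0 = \prod_i \kf_i^s$ is a finite product of separable field extensions by \ref{pides}, the algebra $A$ is free, hence faithfully flat, over $\pi_0 A$, so $\pi$ is a genuine quotient map; its kernel is the gr-group scheme with coordinate ring $A^0$, namely $G^0$, by \ref{grcomp}. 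Being the kernel of a homomorphism, $G^0$ is normal in $G$.

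The heart of the proof is producing a section $s \colon \pi_0 G \to G$ of $\pi$. The key observation is that the projection $q \colon A \twoheadrightarrow A_0$ onto the degree-zero part (killing all strictly positive degrees) is a map of graded Hopf algebras: it is multiplicative because a product of positive-degree elements again has positive degree, and it is compatible with $\Delta$, $\e$ and $S$ because in $\Delta(a) = \sum a_1\otimes a_2$ one has $|a_1|+|a_2| = |a|$, so for $|a|>0$ every summand carries a positive-degree tensor factor and is annihilated by $q\otimes q$ (and $\e$, $S$ are degree-preserving). Now $A_0$ is a finite-dimensional ordinary commutative Hopf algebra over the perfect field $\kf$, so \cite[6.8]{W} applies to $\Spec A_0$ and yields a Hopf-algebra retraction $r \colon A_0 \twoheadrightarrow \pi_0 A_0 = \pi_0 A$ onto its largest separable subalgebra. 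The composite $\sigma = r \circ q \colon A \to \pi_0 A$ is then a Hopf-algebra map restricting to the identity on $\pi_0 A \subset A_0 \subset A$, and dualizing $\sigma$ gives the desired section $s$ with $\pi \circ s = \mathrm{id}_{\pi_0 G}$.

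Finally I would assemble the semidirect product. Using $s$ to let $\pi_0 G$ act on the normal subgroup $G^0$ by conjugation, I would check that the multiplication map $G^0 \rtimes \pi_0 G \to G$ is an isomorphism by the standard unique-factorization argument on $R$-points: for $g \in G(R)$ the element $g\cdot s(\pi(g))^{-1}$ lies in $\ker \pi = G^0(R)$, giving $g = \bigl(g\,s(\pi(g))^{-1}\bigr)\cdot s(\pi(g))$, and this decomposition is forced to be unique since applying $\pi$ pins the $\pi_0 G$-factor down to $\pi(g)$. This identification is natural in $R$, hence an isomorphism of gr-group schemes, so $G \cong G^0 \rtimes \pi_0 G$.

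The main obstacle is the middle step, the construction of the section, and it is precisely there that perfectness of $\kf$ is used: it guarantees, through \cite[6.8]{W} for the ordinary group scheme $\Spec A_0$, that the maximal separable subalgebra splits off as a sub-Hopf-algebra. The graded-specific content is comparatively mild, namely the observation that $\pi_0 A$ sits entirely in degree zero (so no separable data hides in positive degrees, by \ref{pides}) together with the fact that the degree-zero projection is a Hopf map; these two points reduce the whole statement to the ungraded theorem applied to $A_0$.
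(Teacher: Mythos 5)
Your proof is correct and follows essentially the same route as the paper: both split the sequence $1\to G^0\to G\to\pi_0 G\to 1$ by exhibiting a Hopf-algebra retraction $A\to\pi_0 A$ coming from Waterhouse 6.8 over the perfect field, your composite $A\twoheadrightarrow A_0\twoheadrightarrow A_0/N_0\cong\pi_0 A_0$ being literally the paper's quotient $A\to A/N$ since $N=N_0\oplus A^+$. Your version is if anything more carefully organized (the explicit check that the degree-zero projection is a Hopf map, and obtaining surjectivity of $\pi$ from the section rather than by a separate argument).
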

\begin{proof}
Let $A = \kf[G]$. Since $A$ is a product of gr-local rings  $A = \prod_i A^i$,  the nilradical of $A$ is $N = \prod_i \mathfrak{m}_i$ and each $\mathfrak{m}_i  = (A^i)^+ \oplus (\mathfrak{m}_i)_0$, that is, each $\mathfrak{m}_i$ is the irrelevant ideal of $A^i$ plus the zeroth part of $\mathfrak{m}_i$. Then $A/N = A/(N_0 \oplus A^+) = A_0/N_0$, and since $A/N$ is separable (since $\kf$ is perfect), then by \cite[Cor. 6.8]{W} $\pi_0 (A) = \pi_0 (A_0) \cong A_0/N_0 = A/N$. Hence, as in \cite[6.8]{W}, $A/N$ defines a graded subgroup scheme of $G$ which is isomorphic to $\pi_0 G$.  

We want the last map in the exact sequence 
$$\xymatrix{0 \ar[r] & G^0 \ar[r] & G \ar[r] & \pi_0 G}$$ to be surjective. By \cite[6.8]{W} if we look at the zero part of $A$, $A_0$ we have that for a graded algebra $R$, 

$$\xymatrix{\Homa(A_0, R_0) \ar@{->>}[r] & \Homa(\pi_0 A_0, R_0) = \Homg(\pi_0 A, R)}$$ is surjective. Let $ f \in \pi_0 G(R)$, then by the surjectivity there is a map corresponding to $f$, say $g \in  \Homa(A_0, R_0) $. Then we can define $\widehat{g} \in G(R)$ so that it is $g$ in the zero component and zero elsewhere, hence $G(R)  = \Homg(A, R) \twoheadrightarrow \pi_0 G(R) = \Homg(\pi_0A, R)$ is surjective. 

Notice that the map $$\xymatrix{\pi_0 G \ar@{^(->}[r] & G \ar@{->>}[r] & \pi_0 G}$$ corresponds to the composition of maps $$\xymatrix{\prod\kf^s_i \ar[r] & A \ar[r] & A/N\cong \prod\kf^s_i}$$ which is the identity map.
\end{proof}

\begin{theorem}
Let $G$ be an abelian, finite, positive, gr-group scheme over a perfect field, then $G$ splits canonically into four factors of the following types:

\begin{enumerate}
\item \'{e}tale with \'{e}tale dual, 
\item \'{e}tale with connected dual, 
\item connected with \'{e}tale dual,
\item connected with connected dual.
\end{enumerate}
\end{theorem}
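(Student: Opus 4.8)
The plan is to reduce the four-fold splitting to two applications of the connected--\'{e}tale decomposition, one for $G$ itself and one for its graded Cartier dual, and then to take the common refinement. First I would invoke the preceding proposition: since $G$ is finite, positive, and over a perfect field, $G$ is the semi-direct product of its gr-connected component $G^0$ and its gr-group of connected components $\pi_0G$; because $G$ is abelian this semi-direct product is a direct product, so $G\cong G^0\times\pi_0G$ with $G^0$ connected (coordinate ring $A^0$ by Theorem \ref{grcomp}, with no nontrivial idempotents) and $\pi_0G$ \'{e}tale (coordinate ring $\prod_i\kf_i^s$, which is gr-separable by Theorem \ref{pides}). This is the splitting according to the \emph{first} attribute, connected versus \'{e}tale.

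Next I would set up graded Cartier duality. For a finite gr-group scheme $H$ the group algebra $\kf H=\kf[H]^{*}$ is the graded dual Hopf algebra; when $H$ is abelian, $\kf[H]$ is both commutative and cocommutative, so $\kf H$ is again a commutative, cocommutative graded Hopf algebra and therefore represents a finite abelian gr-group scheme $H^{D}$ with $\kf[H^{D}]=\kf H$. One checks that $(-)^{D}$ is an involution, $(H^{D})^{D}\cong H$, that it carries direct products to direct products and short exact sequences to short exact sequences, and that it lets one read off the dual attribute: by the definition of \'{e}tale, $H^{D}$ is \'{e}tale exactly when $\kf H$ is gr-separable, while by \ref{defcon}, $H^{D}$ is connected exactly when $\kf H$ has no nontrivial idempotents.

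With duality in hand I would split each factor a second time, now according to the \emph{dual} attribute. Applying the connected--\'{e}tale decomposition to $(G^0)^{D}$ and dualizing back splits the connected group $G^0\cong G_{ce}\times G_{cc}$, where $G_{ce}$ has \'{e}tale dual and $G_{cc}$ has connected dual (both factors remain connected, being direct factors of $G^0$). Likewise $\pi_0G\cong G_{ee}\times G_{ec}$, with $G_{ee}$ \'{e}tale with \'{e}tale dual and $G_{ec}$ \'{e}tale with connected dual. Combining,
$$G\cong G_{ee}\times G_{ec}\times G_{ce}\times G_{cc},$$
which is the asserted decomposition; each factor is defined by intrinsic conditions on $G$ and on $G^{D}$, and since $G$ is abelian every subgroup scheme in sight is normal, so the product is canonical.

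The main obstacle is the careful construction of graded Cartier duality rather than the combinatorics of the refinement. The graded dual of the positively graded, finite dimensional $\kf[G]$ is concentrated in non-positive degrees, so to feed $G^{D}$ back into the preceding proposition and Theorem \ref{grcomp} one must regrade by $i\mapsto -i$ and verify that the result is again a positive, finite, abelian gr-group scheme with the expected connected and \'{e}tale components. One must also track the Koszul signs $\tau(a\otimes b)=(-1)^{|a||b|}b\otimes a$ through the identification $(H^{D})^{D}\cong H$ and through the compatibility of the two product decompositions, namely that the idempotents cutting out $G^0,\pi_0G$ and those cutting out the dual-type factors commute, so that the four pieces genuinely assemble into a direct product.
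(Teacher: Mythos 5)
Your proposal follows essentially the same route as the paper: split $G\cong G^0\times\pi_0 G$ using the preceding proposition, then refine each factor by applying the connected--\'{e}tale decomposition to its graded Cartier dual and dualizing back, yielding the four pieces. Your added care about the duality itself (regrading the negatively concentrated dual, checking $(H^{D})^{D}\cong H$, and the compatibility of the two systems of idempotents) addresses points the paper's proof passes over silently, but the underlying argument is the same.
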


\begin{proof}
Since $G$ is abelian we have that $G = G^0 \times \pi_0 G$ and when taking duals we have the following decomposition, $$G \cong G^{\sharp \sharp} \cong ((G^\sharp)^0 )^\sharp\times (\pi_0 (G^\sharp))^\sharp.$$ 
Applying this decomposition to both $G^0$ and $\pi_0 G$ we get that $$G^0 \cong ((G^0)^\sharp)^0)^\sharp \times (\pi_0((G^0)^\sharp))^\sharp$$
 which is a product of connected with connected dual and connected with \'{e}tale dual. Similarly $$\pi_0 G \cong ((\pi_0(G^\sharp))^0)^\sharp \times (\pi_0(\pi_0(G^\sharp))^\sharp$$ which is a product of \'{e}tale with connected dual and \'{e}tale with \'{e}tale dual. 
\end{proof}

\subsection{Examples of decomposition}

We provide some examples of the decomposition of $G = G^0 \times \pi_0 G$ for finite abelian (gr)-group schemes.

\begin{example}[Ungraded case; characteristic zero]
Consider $A = \R[x]/(x^3-1)$ where $|x| = 0$, $\e(x) = 1$, $\Delta(x) = x \otimes x$ and $S(x) = x^2$. By the Chinese Remainder Theorem we can write $A$ as a product of local rings, 
 $$A \cong \R[x]/(x-1) \times \R[x]/(x^2+x+1) \cong \R \times \C.$$ We follow the isomorphism $$A \cong \R[x]/(x^3-1) \cong \R[x]/(x-1) \times \R[x]/(x^2+x+1)$$
  by finding nontrivial idempotents of $A$. We find that two nontrivial idempotents are $e = 1/3(x^2+x+1)$ and $(1-e) = -1/3(x^2+x -2)$; note that $\e(e) = 1$ and $\e(1-e) = 0$. Then $Ae \cong \R[x]/(x-1)$,  $A(1-e) \cong  \R[x]/(x^2+x+1)$ and, $A^0 \cong \R[x]/(x-1) \cong \R$ where $A^0$ is as in \ref{grcomp}. Hence $A^0$ is a sub-Hopf algebra with group scheme $G^0(R) = \{e\}$ the trivial group . Now $A \otimes \C = \C \times \C \times \C$, hence $A$ is separable and $\pi_0 A = A$, then $\pi_0 G(R) =  \mu_3(R) = \{ r \in R \,\, | \,\, r^3 =1\}$ and  $G(R) = \mu_3(R) = (G^0 \times \pi_0 G)(R)$. 
\end{example}

\begin{example}[Ungraded case; finite characteristic]
Consider $B= \F_2[x]/(x^3-1)$ where $|x| = 0$,  $\e(x) = 1$, $\Delta(x) = x \otimes x$ and $S(x) = x^2$. By the Chinese Remainder Theorem
 $$B \cong \F_2[x]/(x-1) \times \F_2[x]/(x^2+x+1) \cong \F_2 \times \F_2(\zeta)$$
  where $\zeta$ is a primitive $\nth{3}$ root of unity. We follow the isomorphism 
  $$B \cong \F_2[x]/(x^3-1) \cong \F_2[x]/(x-1) \times \F_2[x]/(x^2+x+1)$$ by finding nontrivial idempotents of $B$. We find that two nontrivial idempotents are $e = (x^2+x+1)$ and $(1-e) = (x^2+x)$; note that $\e(e) = 1$ and $\e(1-e) = 0$. Then $Be \cong \F_2[x]/(x-1)$,  $B(1-e) \cong  \F_2[x]/(x^2+x+1)$ and, $B^0 \cong \F_2[x]/(x-1) \cong \F_2$. Hence $B^0$ is a sub-Hopf algebra with group scheme $G^0(R) = \{e\}$ the trivial group. Now $B \otimes \overline{\F}_2 = \overline{\F}_2 \times \overline{\F}_2 \times \overline{\F}_2$, hence $B$ is separable and $\pi_0 B = B$, then $\pi_0 G(R) =  \mu_3(R) = \{ r \in R \,\, | \,\, r^3 =1\}$ and  $G(R) = \mu_3(R) = (G^0 \times \pi_0 G)(R)$. 
\end{example}

\begin{example}[Graded case; finite characteristic]
Let $C = \F_2[x,y]^{gr}/(x^3-1, y^2)$ where $|x| = 0$, $|y| = 1$,  $\e(x) = 1, \e(y) = 0$, $\Delta(x) = x \otimes x$ and $\Delta(y) = y \otimes 1 + 1 \otimes y$. 
Since tensor commutes with products we have that 
{\setlength\arraycolsep{1pt}
\begin{eqnarray*}
C   & \cong  &\left( \F_2[x]^{gr}/(x-1) \times \F_2[x]^{gr}/(x^2+x+1) \right) \otimes \F_2[y]^{gr}/(y^2)\\
 &   \cong &\F_2[x,y]^{gr}/(x-1, y^2) \times \F_2[x, y]^{gr}/(x^2+x+1, y^2). \end{eqnarray*}}
 This isomorphism is given by two nontrivial idempotents of $C$. By \ref{idem} the idempotents of $C$ are homogeneous of degree zero, therefore the only trivial idempotents of $C$ are, like in the previous example,
$e = (x^2+x+1)$ and $(1-e) = (x^2+x)$, $Ce \cong \F_2[x,y]^{gr}/(x-1,y^2)$ and $C(1-e) \cong \F_2[x,y]^{gr}/(x^2+x+1, y^2)$. Now $\pi_0 C \cong \F_2[x]^{gr}/(x^3-1)$ and $C^0 = \F_2[x,y]^{gr}/(x-1, y^2)$, also $\pi_0 G(R) = \mu_3(R_0)$ where $R_0$ is the degree zero part of a commutative graded ring $R$. Now $\F_2[x]^{gr}/(x-1) \cong \F_2$ and $\F_2[x]^{gr}/(x^2+x+1) \cong \F_2(\zeta)$, where $\zeta$ is a $\nth{3}$ primitive root, are  local rings. Hence $C \cong (\F_2 \times \F_2(\zeta)) \otimes \F_2[y]^{gr}/(y^2) \cong \F_2[y]^{gr}/(y^2) \times \F_2(\zeta)[y]^{gr}/(y^2)$ where each term is gr-local, both with unique homogeneous maximal ideal $(y)$. Now $G^0(R) = \nil^2_1(R) = \{ r \in R_1 \,\, |\,\, r^2 = 0\}$ with group structure given by sum.  Now $G(R) = (G^0 \times \pi_0 G)(R)$. Let $f, g \in G(R)$, then $f$ and $g$ are determined by what they sent $x$ and $y$ to, say $f(x) = r_1, g(x) = r_2 \in \mu_3(R_0)$ and $f(y) = s_1, g(y) = s_2 \in \nil^2_1(R)$. Then $(f \ast g)(x) = r_1r_2$ and $(f \ast g)(y) = s_1 +s_2$, hence $G(R) = \nil^2_1(R) \times \mu_3(R_0)$ where the multiplication is given by $(s_1, r_1) \cdot (s_2, r_2) = ( s_1 + s_2, r_1r_2)$ and $(s,r)^{-1} = (s, r^2)$ which corresponds to the antipode map $S: C \to C$ where $S(x) = x^2$ and $S(y) = -y$.  
\end{example}

\begin{example}[Dual Steenrod subalgebra $A(1)$]
For $A(1)= \F_2[\xi_1, \xi_2]^{gr}/(\xi_1^4, \xi_2^2)$, $\pi_0 A(1) = \F_2$ hence $\pi_0 G = \{e\}$. In fact $A(1)$ is a gr-local algebra with graded maximal ideal $(\xi_1, \xi_2)$ and $A^0 = A(1)$ therefore $G \cong G^0$. 
\end{example}

\appendix

\section{Graded spectrum and graded local rings}\label{local}

\begin{definition}\label{grlo}
 A graded ring $R$ is said to be \textit{gr-local} if there exists a unique homogeneous maximal ideal $\mathfrak{m}$. 
\end{definition}

\begin{definition}Let $R$ be a positively graded ring, then $R_+ = \sum_{i >0} R_i$ is a homogeneous ideal and it is called the \textit{irrelevant ideal}.
\end{definition}

From the definition above, we get the following observation. 
\begin{rk}
Let $R$ be a positively graded ring, for all ideals $I$ of $R_0$, $I \oplus R_+$ is a homogeneous ideal for $R$. 
\end{rk}

The following proposition is a consequence of the previous remark.  
\begin{proposition}\label{gr-local}
Let $R$ be a positively graded ring, then $R_0$ is a local ring if and only if $R$ is gr-local.
\end{proposition}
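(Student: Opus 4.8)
The plan is to set up an inclusion-preserving bijection between the homogeneous maximal ideals of $R$ (meaning ideals maximal among proper homogeneous ideals) and the maximal ideals of $R_0$, from which the equivalence is immediate: $R$ has a unique homogeneous maximal ideal exactly when $R_0$ has a unique maximal ideal.

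First I would record the easy half of the correspondence, which is essentially the content of the preceding remark: for any ideal $I \subseteq R_0$ the set $I \oplus R_+$ is a homogeneous ideal of $R$, and the assignment $I \mapsto I \oplus R_+$ is inclusion-preserving with inverse $J \mapsto J \cap R_0$ on homogeneous ideals containing $R_+$. Since $R/(I \oplus R_+) \cong R_0/I$, the ideal $I$ is maximal in $R_0$ if and only if $I \oplus R_+$ is maximal among the homogeneous ideals of $R$ containing $R_+$.

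The key step, and the only place where positivity of the grading is really used, is to show that every homogeneous maximal ideal $\mathfrak{m}$ of $R$ contains $R_+$, so that the correspondence above is onto. For this I would consider the homogeneous ideal $\mathfrak{m} + R_+$ and argue that it is proper: if we had $1 = p + s$ with $p \in \mathfrak{m}$ and $s \in R_+$, then comparing degree-zero components (using that $\mathfrak{m}$ is homogeneous, so its degree-zero part $p_0$ lies in $\mathfrak{m}$, while $s$ has no degree-zero part) would give $p_0 = 1 \in \mathfrak{m}$, contradicting properness of $\mathfrak{m}$. Hence $\mathfrak{m} + R_+$ is a proper homogeneous ideal containing $\mathfrak{m}$, and maximality forces $\mathfrak{m} + R_+ = \mathfrak{m}$, i.e. $R_+ \subseteq \mathfrak{m}$. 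Consequently $\mathfrak{m} = (\mathfrak{m} \cap R_0) \oplus R_+$, so every homogeneous maximal ideal arises from a necessarily maximal ideal of $R_0$.

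Combining the two steps, the maps $I \mapsto I \oplus R_+$ and $\mathfrak{m} \mapsto \mathfrak{m} \cap R_0$ are mutually inverse inclusion-preserving bijections between the maximal ideals of $R_0$ and the homogeneous maximal ideals of $R$, which yields the stated equivalence. The main, and essentially only, obstacle is the containment $R_+ \subseteq \mathfrak{m}$; once that is in hand the remainder is the formal ideal correspondence through $R/R_+ \cong R_0$. I would also double-check the degree bookkeeping in the properness argument and confirm the intended reading of \emph{homogeneous maximal ideal} as maximal among proper homogeneous ideals, but I expect no genuine difficulty there.
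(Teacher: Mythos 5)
Your argument is correct, and it follows the route the paper intends: the paper offers no written proof, only the preceding remark that $I \oplus R_+$ is a homogeneous ideal, which is exactly the easy half of your correspondence. Your key step --- that positivity forces $R_+ \subseteq \mathfrak{m}$ for every gr-maximal ideal $\mathfrak{m}$, via the degree-zero comparison in $1 = p + s$ --- is precisely the detail the paper leaves implicit, and your reading of ``homogeneous maximal ideal'' as gr-maximal is the one the paper uses (and the two notions coincide for positively graded rings by your own computation $R/(\mathfrak{m}\cap R_0 \oplus R_+) \cong R_0/(\mathfrak{m}\cap R_0)$).
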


\begin{proposition}
If $R$ is positively graded and local (in the ungraded sense), then $R$ is gr-local.
\end{proposition}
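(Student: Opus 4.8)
The plan is to reduce the claim to Proposition~\ref{gr-local}, which already characterizes gr-locality of a positively graded ring $R$ in terms of its degree-zero part: namely, $R$ is gr-local if and only if $R_0$ is local. Consequently it suffices to prove that whenever $R$ is positively graded and local in the ordinary (ungraded) sense, its degree-zero part $R_0$ is a local ring.

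To that end I would first record that the irrelevant ideal $R_+ = \sum_{i>0} R_i$ is a proper two-sided homogeneous ideal of $R$. It is proper because $1 \in R_0$ forces $1 \notin R_+$, and the projection collapsing all positive-degree components yields a ring isomorphism $R/R_+ \cong R_0$. Thus $R_0$ is realized as a nonzero quotient of $R$.

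The key step is then the elementary observation that a nonzero quotient of a local ring is again local. Concretely, let $\mathfrak{M}$ be the unique maximal ideal of $R$. Since $R_+$ is proper, it is contained in some maximal ideal, which must be $\mathfrak{M}$; by the correspondence theorem the maximal ideals of $R/R_+$ are exactly the images of the maximal ideals of $R$ containing $R_+$, of which there is precisely one, namely $\mathfrak{M}$. Hence $R/R_+ \cong R_0$ has a unique maximal ideal and is local. Applying Proposition~\ref{gr-local} now gives that $R$ is gr-local, completing the argument.

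I do not anticipate a genuine obstacle here: the entire content is the reduction via Proposition~\ref{gr-local} together with the fact that $R_0$ appears as the quotient $R/R_+$. The only points requiring mild care are to note that $R_+$ is a proper ideal (so that the quotient is nonzero) and that locality is used only in the weak sense of possessing a unique maximal ideal, with no Noetherian or finiteness hypotheses invoked.
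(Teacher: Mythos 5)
Your argument is correct. It takes a slightly different route from the paper's: you reduce to Proposition~\ref{gr-local} by showing that $R_0 \cong R/R_+$ is local (a nonzero quotient of a local ring being local, since $R_+$ is proper and hence contained in the unique maximal ideal $\mathfrak{M}$), whereas the paper argues directly that $\mathfrak{M}$ itself is homogeneous, via the decomposition $\mathfrak{M} = (R_0 \cap \mathfrak{M}) \oplus R_+$, and is therefore the unique homogeneous maximal ideal. The two proofs hinge on the same single fact --- that the irrelevant ideal $R_+$ is proper and so sits inside $\mathfrak{M}$ --- and are of comparable length. Your version has the small advantage of leaning on the already-stated equivalence in Proposition~\ref{gr-local} (so the gr-local witness $\mathfrak{m}_0 \oplus R_+$ comes for free), while the paper's version is self-contained and exhibits the graded maximal ideal explicitly as $\mathfrak{M}$ itself. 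Both are complete; your care in noting that $R_+$ is proper and that no finiteness hypotheses are needed is appropriate.
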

\begin{proof}
Let $\mathfrak{m}$ be the unique maximal ideal of $R$. Therefore $R_+ \subset \mathfrak{m}$. Then $\mathfrak{m}$ is necessary homogeneous
since $\mathfrak{m} = (R_0 \cap \mathfrak{m}) \oplus R_+$. 
\end{proof}

\begin{rk}
A graded ring may be gr-local but not graded and local. For example, consider $R = \kf[x]$ with $|x| =1$, then 
$R_+$ is the unique maximal homogeneous ideal of $R$ but $R$ is not local. 
The reason is that any other maximal ideal of $R$ is not homogeneous.  
\end{rk}

\begin{definition}
 Let $R$ be a graded ring and $M$ a graded $R$-module. Then the \textit{graded Jacobson radical} of $M$ denoted 
by $J^g(M)$ is the intersection of all gr-maximal submodules of $M$.
\end{definition}

\begin{rk}
By \cite[2.9.1.vi]{NV} for a graded ring $R$, $J^g(R)$ is the largest proper ideal $I$ such that any $a \in R$ homogeneous is 
invertible, if the class of $a$ in $R/I$ is invertible. In the case of $R$ a gr-local ring with gr-maximal ideal 
$\mathfrak{m}$ then $J^g(R) = \mathfrak{m}$. Hence for $R$ gr-local, $R/\mathfrak{m}$ is a gr-division ring.
When $R$ is commutative we get that $R/\mathfrak{m}$ is a gr-field. We called that field \textit{the residue gr-field} of $R$.
\end{rk}

\begin{definition}(From \cite[2.11]{NV})
Let $R$ be a graded ring a graded ideal $P$ of $R$ is \textit{gr-prime} if $P \neq R$ and for graded ideals $I$ and $J$ of $R$ we have $I \subset P$ or $J \subset P$ only when $IJ \subset P$. 
\end{definition}

\begin{definition}(From \cite[2.11]{NV})
 The set of all gr-prime ideals of $R$ is denoted by $\Specg(R)$ and it is called the graded (prime) spectrum of $R$.
 \end{definition}
 
 \begin{proposition}(From \cite[II.2.11]{CO})\label{idem}
The idempotents of a graded ring $R$ are homogeneous of degree zero. 
\end{proposition}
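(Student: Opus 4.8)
The plan is to reduce the statement to a degreewise computation and to exploit a single invertible gadget built from the degree-zero component. Write the idempotent as $e=\sum_{i\ge 0}e_i$ with $e_i\in R_i$; I treat the positively graded case first, which is the one needed throughout the paper, and handle the general $\mathbb{Z}$-graded case in the last paragraph. Comparing homogeneous components of $e^2=e$ in degree $0$ gives $e_0^2=e_0$, since in a positive grading the only way to write $0=a+b$ with $a,b\ge 0$ is $a=b=0$. Hence $e_0$ is itself idempotent, and the element $u:=1-2e_0$ satisfies $u^2=1-4e_0+4e_0^2=1$, so $u$ is a unit. Note that $e_0$ has even degree, so it commutes with every homogeneous element with no sign, and graded commutativity therefore introduces no obstruction here. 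This $u$ is the key: it will let me cancel the troublesome diagonal terms.

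First I would record the degree-$j$ part of $e^2=e$ for $j\ge 1$: it reads $e_j=\sum_{a+b=j}e_ae_b=2e_0e_j+\sum_{\substack{a+b=j\\ a,b\ge 1}}e_ae_b$, where the first summand collects the two terms $(a,b)=(0,j),(j,0)$. Rearranging, $(1-2e_0)\,e_j=\sum_{\substack{a+b=j\\ a,b\ge 1}}e_ae_b$. Now I proceed by induction on $j$. In the inductive step the right-hand side only involves $e_a$ with $1\le a\le j-1$, all of which vanish by the inductive hypothesis, so $u\,e_j=0$; multiplying by $u^{-1}$ gives $e_j=0$. Thus $e_i=0$ for every $i\ge 1$ and $e=e_0\in R_0$, which is the claim. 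The argument is uniform in the characteristic (in characteristic $2$ one simply has $u=1$) and makes no use of reducedness.

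The step I expect to be the real obstacle is exactly the one the naive approach stumbles on: comparing only top-degree components of $e^2=e$ yields $e_d^2=0$, i.e.\ nilpotency of the top component rather than its vanishing, which is useless over a non-reduced ring. What rescues the argument is precisely that $1-2e_0$ is a genuine unit, turning each degreewise equation into an honest cancellation. For a general $\mathbb{Z}$-graded ring the degree-$0$ equation is polluted by cross terms $e_ae_{-a}$, so $e_0$ need not be idempotent and the induction breaks; in that generality I would instead use the rescaling map $\phi\colon R\to R[t,t^{-1}]$ defined on homogeneous elements by $\phi(r_n)=t^n r_n$. One checks directly that $\phi$ is a ring homomorphism, so $\phi(e)=\sum_i t^i e_i$ is an idempotent of $R[t,t^{-1}]$; since the idempotents of $R[t,t^{-1}]$ coincide with those of $R$ (the fibres of $\Spec R[t,t^{-1}]\to\Spec R$ are copies of $\mathbb{G}_m$, which is connected), $\phi(e)$ must be independent of $t$, forcing $e_i=0$ for $i\ne 0$. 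Finally I would stress that commutativity is essential: grading $\M_2(\kf)$ by conjugation with $\mathrm{diag}(t,1)$ makes $\tfrac12\bigl(\begin{smallmatrix}1&1\\1&1\end{smallmatrix}\bigr)$ an idempotent with nonzero components in degrees $\pm 1$, so some commutativity hypothesis cannot be dropped.
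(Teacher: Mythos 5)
The paper does not actually prove this proposition: it is imported verbatim from \cite[II.2.11]{CO} with no argument given. Your write-up is therefore a genuine self-contained proof rather than a variant of one in the text, and for the cases the paper uses it is correct. In the positively graded case the chain ``degree-zero comparison gives $e_0^2=e_0$, hence $u=1-2e_0$ is a unit with $u^2=1$, hence the degree-$j$ equation $(1-2e_0)e_j=-\sum_{a+b=j,\,a,b\ge 1}e_ae_b$ kills $e_j$ by induction'' is sound, and the one hypothesis you are consuming --- that $e_0$ commutes with each $e_j$ --- is exactly what graded commutativity supplies, since $|e_0|=0$. You are right that this hypothesis is not decorative: already in the positively graded, genuinely noncommutative ring of upper-triangular $2\times 2$ matrices over $\kf$, graded with $E_{11},E_{22}$ in degree $0$ and $E_{12}$ in degree $1$, the element $E_{11}+E_{12}$ is a non-homogeneous idempotent; your $\M_2(\kf)$ example makes the same point with a $\Z$-grading. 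So the proposition as literally printed (``a graded ring $R$'') needs the standing commutativity assumption of the paper's category to be true, and your proof correctly isolates where it enters.

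The one soft spot is the general $\Z$-graded branch. The rescaling homomorphism $\phi(r_n)=t^nr_n$ and the fact that idempotents of $R[t,t^{-1}]$ lie in $R$ are standard for \emph{strictly} commutative $R$ (reduce modulo each prime to see that $\phi(e)$ minus its constant term has nilpotent coefficients, then use that commuting idempotents with nilpotent difference coincide, since $(e-e')^3=e-e'$). But the rings of this paper are graded commutative in the Koszul sense, so when $2\neq 0$ and the odd part is nonzero the ring is not commutative and the $\Spec$-based step does not literally apply; one must first pass to the commutative quotient by the ideal generated by the odd part (a nil ideal, since any product repeating an odd factor vanishes up to sign) and then argue that the relevant idempotent data descends and lifts homogeneously, which takes a little more care than your parenthetical suggests. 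This corner never arises in the paper --- every invocation of Proposition \ref{idem} concerns coordinate rings that are positively graded, or characteristic $2$ where graded commutative means commutative --- so your argument covers all actual uses; but if you want the statement in full generality for graded commutative rings with odd part in odd characteristic, that reduction should be spelled out.
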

 
 \begin{proposition}\label{grconcon}
 Let $R$ be a graded ring, then $\Specg(R)$ is connected if and only if $\Spec(R_0$) is connected, where $R_0$ is the degree zero part of $R$. 
 \end{proposition}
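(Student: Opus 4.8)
The plan is to route both connectedness statements through the classical dictionary relating connectedness of a prime spectrum to the absence of nontrivial idempotents, and then to identify the idempotents of $R$ with those of $R_0$ by means of Proposition \ref{idem}. Concretely, I would prove the chain of equivalences: $\Specg(R)$ connected $\iff$ $R$ has no idempotents other than $0,1$ $\iff$ $R_0$ has no idempotents other than $0,1$ $\iff$ $\Spec(R_0)$ connected. The first and last equivalences are the graded and ungraded versions of the same dictionary, while the middle one is the easy identification coming from \ref{idem}. For the ungraded endpoint I would simply recall the standard fact that for any commutative ring, $\Spec(R_0)$ is connected precisely when $R_0$ has no nontrivial idempotents, a clopen partition of the spectrum being exactly the data of a pair of complementary idempotents.

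The substance is the graded dictionary: $\Specg(R)$ is connected if and only if $R$ has no nontrivial idempotents. For the direction producing a disconnection, let $e$ be a nontrivial idempotent; by Proposition \ref{idem} it is homogeneous of degree zero, so $e$ and $1-e$ are homogeneous. I would write $\Specg(R) = V^g(e) \sqcup V^g(1-e)$, where $V^g(\fatdot)$ denotes the graded vanishing locus. For a gr-prime $P$ one has $e(1-e)=0\in P$ while $e+(1-e)=1\notin P$, so $P$ contains exactly one of $e,1-e$; this shows the two pieces partition $\Specg(R)$ and that each is clopen (each is simultaneously a graded vanishing locus and the complement of one). Nontriviality of $e$ forces each piece to be nonempty, which is the point requiring care, addressed below.

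For the converse, a clopen partition of $\Specg(R)$ is governed by a pair of coprime homogeneous ideals $I,J$ with $V^g(I)\cap V^g(J)=\varnothing$ and $V^g(I)\cup V^g(J)=\Specg(R)$, whence $I+J=R$ and $IJ$ lies in the graded nilradical; the usual Chinese-remainder argument then yields a decomposition $R\cong R_1\times R_2$ of graded rings, hence a nontrivial idempotent (automatically homogeneous of degree zero by \ref{idem}), exactly as in the ungraded structural argument. Finally I would note that, again by \ref{idem}, every idempotent of $R$ is homogeneous of degree zero and so lies in $R_0$, while conversely every idempotent of $R_0$ is an idempotent of $R$; thus $R$ has a nontrivial idempotent if and only if $R_0$ does. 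Composing the three equivalences gives the proposition.

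The main obstacle will be the nonemptiness of the two pieces $V^g(e)$ and $V^g(1-e)$ in the graded dictionary, equivalently the assertion that a non-nilpotent homogeneous element (such as $e$ when $e\neq 0$) is excluded from some gr-prime. This rests on the graded analogue of the statement that the nilradical is the intersection of all primes, namely that the graded nilradical equals the intersection of all gr-primes, which is standard in the graded setting (c.f. \cite{NV}). Once that existence result is invoked, the remainder is routine idempotent bookkeeping parallel to the ungraded case.
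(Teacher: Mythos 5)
Your proof is correct and follows essentially the same route as the paper: the paper's own argument is exactly the chain $\Specg(R)$ connected $\iff$ $R$ has no nontrivial idempotents $\iff$ $R_0$ has no nontrivial idempotents (via Proposition \ref{idem}) $\iff$ $\Spec(R_0)$ connected. You simply supply the details of the graded connectedness--idempotent dictionary (including the nonemptiness point via the graded nilradical) that the paper asserts without proof.
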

 \begin{proof}
 Note that $\Specg(R)$ is connected if and only if $R$ contains no nontrivial idempotents, then by \ref{grcon} it is equivalent to $R_0$ having  no nontrivial idempotents which is the case if and only if $\Spec(R_0)$ is connected.
 \end{proof}

\begin{proposition}\label{locon}
Let $R$ be a local ring, respectively gr-local, then $\Spec(R)$, respectively $\Specg(R)$, is connected. 
\end{proposition}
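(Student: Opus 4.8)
The plan is to reduce both assertions to the single statement that a local (respectively gr-local) ring has only the trivial idempotents $0$ and $1$. This works because of the standard dictionary between connectivity of a spectrum and idempotents: $\Spec(R)$ is disconnected precisely when $R$ splits as a product $R\cong R_1\times R_2$ of two nonzero rings, which is equivalent to the existence of a nontrivial idempotent $e\neq 0,1$. Exactly this dictionary is already invoked in the proof of \ref{grconcon}, where it is noted that $\Specg(R)$ is connected if and only if $R$ contains no nontrivial idempotents. So once I know that local and gr-local rings are idempotent-free, both halves of the proposition follow at once.

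For the ungraded case I would argue straight from the local dichotomy. Let $\mathfrak{m}$ be the maximal ideal of $R$ and let $e\in R$ be an idempotent, so that $e(e-1)=0$. Locality gives that $e$ is either a unit or lies in $\mathfrak{m}$. If $e$ is a unit, multiplying by $e^{-1}$ forces $e=1$; if $e\in\mathfrak{m}$, then $1-e\notin\mathfrak{m}$ is a unit, so $e-1$ is invertible and multiplying by $(e-1)^{-1}$ forces $e=0$. Hence $R$ has no nontrivial idempotents and $\Spec(R)$ is connected.

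For the gr-local case I would run the same computation, taking care of the grading. By \ref{idem} every idempotent of $R$ is homogeneous of degree zero, so any idempotent $e$ is in particular a homogeneous element. Since $R$ is gr-local with gr-maximal ideal $\mathfrak{m}$, the identification $J^g(R)=\mathfrak{m}$ with $R/\mathfrak{m}$ a gr-division ring shows that each homogeneous element is either invertible or lies in $\mathfrak{m}$. Applying this dichotomy to the homogeneous idempotent $e$ and repeating the computation above forces $e\in\{0,1\}$, so $R$ has no nontrivial idempotents and $\Specg(R)$ is connected by the criterion recorded in \ref{grconcon}.

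The argument is essentially routine, and I do not expect a serious obstacle; the only point requiring care is the gr-local case, where one must first invoke \ref{idem} to know that idempotents are homogeneous before the gr-local ``unit-or-in-$\mathfrak{m}$'' dichotomy can be applied. I would present this direct idempotent argument as the main line, since it makes no positivity assumption. As an alternative worth mentioning, in the positively graded situation one could instead combine \ref{gr-local} (giving that $R_0$ is local) with the ungraded case applied to $R_0$ and then \ref{grconcon}, but the direct route is cleaner and more general.
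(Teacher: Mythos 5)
Your proof is correct, but it takes a different route from the one in the paper. The paper argues directly with ideals: it supposes a disconnection of the spectrum is exhibited by two (gr\nobreakdash-)prime ideals $P$ and $Q$ with $P+Q=R$, observes that each is contained in some (gr\nobreakdash-)maximal ideal, and uses uniqueness of the (gr\nobreakdash-)maximal ideal $\mathfrak{m}$ to conclude $R=P+Q\subseteq\mathfrak{m}$, a contradiction. You instead pass through the idempotent criterion for connectivity (the same dictionary the paper itself invokes in the proof of \ref{grconcon}) and show that a local, respectively gr-local, ring has no nontrivial idempotents, using \ref{idem} to reduce the graded case to the ``unit or in $\mathfrak{m}$'' dichotomy for homogeneous elements. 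Both arguments are sound and elementary. Your version has the advantage of making the two connectivity statements in the appendix run through the same idempotent criterion, and it handles the graded and ungraded cases by one uniform computation; the paper's version avoids any appeal to the homogeneity of idempotents and works straight from the topology of the spectrum, though its formulation of a disconnection in terms of two prime ideals with $P+Q=R$ and $P\cap Q=(0)$ is looser than the standard one (a disconnection is more naturally given by a pair of ideals with $I+J=R$ and $IJ$ nilpotent, which is exactly what a nontrivial idempotent encodes). Your alternative remark, reducing the positively graded case to $R_0$ via \ref{gr-local} and \ref{grconcon}, is also valid but, as you say, less general than the direct argument since the proposition does not assume positivity.
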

\begin{proof}
Let $P$ and $Q$ be gr-prime ideals such that $P+Q = R$ and $P \cap Q = (0)$ then there exists a (gr-)maximal ideals $\mathfrak{m}_1$ and $\mathfrak{m}_2$  such that $P \subset \mathfrak{m}_1$ and 
$Q \subset \mathfrak{m}_2$. Now if $R$ is (gr-)local then $\mathfrak{m}_1 = \mathfrak{m}_2 = \mathfrak{m}$ is the unique (gr-)maximal ideal, hence $R = P + Q \subset \mathfrak{m}$ which is a contradiction. 
\end{proof}

\section{Graded Henselian rings}

\begin{proposition}\label{dim}
 Any finitely generated gr-module $M$ over a gr-division ring $R$ has a well defined notion of dimension and $M \cong \sum_{i =1}^n R(a_i)$ where
$a_1, a_2, \ldots, a_n$ is a minimal set of homogeneous generators or a basis for $M$. We then denote the dimension by 
$\dim_R(M)$. 
\end{proposition}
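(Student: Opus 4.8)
The plan is to mimic the classical proof that every module over a division ring is a free vector space with a well-defined dimension, carrying the grading along at each step. Two things must be established: first, that $M$ admits a homogeneous basis, so that $M \cong \bigoplus_{i=1}^n R(a_i)$; and second, that the number $n$ of basis elements is independent of the chosen basis, so that $\dim_R(M)$ makes sense.

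First I would produce a finite homogeneous generating set. Starting from any finite generating set (which exists since $M$ is finitely generated), I replace each generator by its finitely many homogeneous components; these still generate $M$ and are homogeneous. Among all finite homogeneous generating sets I then choose one, $\{a_1,\ldots,a_n\}$, of minimal cardinality. Next I would show that such a minimal homogeneous generating set is linearly independent, hence a basis. Suppose there were a nontrivial relation $\sum_i r_i a_i = 0$. Decomposing it into homogeneous components, I may assume the relation is homogeneous, so each $r_i$ is homogeneous of the appropriate degree and some $r_{i_0}\neq 0$. Because $R$ is a gr-division ring, the nonzero homogeneous element $r_{i_0}$ is invertible, whence $a_{i_0} = -r_{i_0}^{-1}\sum_{i\neq i_0} r_i a_i$ lies in the submodule generated by the remaining $a_i$, contradicting minimality. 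Thus the $a_i$ form a homogeneous basis and $M \cong \bigoplus_{i=1}^n R(a_i)$, with each $R(a_i)$ the degree shift making $a_i$ have its prescribed degree.

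Finally, for well-definedness of $n$, I would prove a graded Steinitz exchange lemma: if $\{a_1,\ldots,a_n\}$ is a basis and $\{b_1,\ldots,b_m\}$ is any linearly independent homogeneous set, then $m \leq n$. The argument is the usual one-at-a-time replacement. Writing $b_1 = \sum_i r_i a_i$ with some $r_{i_0}\neq 0$, invertibility of the homogeneous coefficient $r_{i_0}$ lets me solve for $a_{i_0}$ and exchange it for $b_1$ while keeping a basis; iterating exchanges all of the $b_j$ into the basis and forces $m \leq n$. Running the same argument with the roles reversed gives $n \leq m$, so $n = m$ and the dimension is well-defined.

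The main obstacle, and the step where graded care is genuinely needed, is the linear-independence argument: one must reduce an arbitrary relation to a homogeneous one so that the coefficient being cancelled is homogeneous and therefore invertible, while tracking degrees so that the shifts $R(a_i)$ come out correctly. A related subtlety worth flagging is that when $R$ has homogeneous units in nonzero degrees, multiplication by such a unit of degree $d$ gives an isomorphism $R(a)\cong R(a+d)$; consequently the individual degrees $a_i$ are only determined modulo the subgroup of the grading group consisting of degrees of homogeneous units, and only the total count $n$ is a genuine invariant. It is this invariant $n$ that we record as $\dim_R(M)$. The exchange argument is otherwise a routine transcription of the ungraded case, valid even when $R$ is noncommutative provided one keeps coefficients consistently on the left.
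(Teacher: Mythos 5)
Your proof is correct and follows essentially the same route as the paper: the graded-specific content in both is the reduction of an arbitrary linear relation to its homogeneous components (so that nonzero coefficients are homogeneous, hence invertible in the gr-division ring). The paper in fact proves only this reduction lemma and defers the existence of a homogeneous basis and the exchange argument to ``the usual argument,'' whereas you carry those steps out explicitly; your added remark that the shifts $R(a_i)$ are only determined up to degrees of homogeneous units is a correct refinement not addressed in the paper.
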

\begin{proof}
We claim that a homogeneous set of elements is linearly independent 
if and only if they are `homogeneously' linearly independent. That is, 
$\sum_{i = 1}^n r_i a_i = 0$ for $r_i \in R$ implies that $r_i = 0$ for all $i$ if and only if $\sum_{i=1}^n s_ia_i = 0$ for $s_i$ homogeneous in $R$ implies that $s_i = 0$. 

One direction is clear. Now assume that $a_1, \ldots, a_n$ are homogeneously independent, then let $\sum_{i=1}^n r_i a_i = 0$ where $r_i$ are not 
necessarily homogeneous, we can rewrite this sum in terms of homogeneous elements by $\sum_{i= 1}^n r_i a_i = \sum_{i=1}^n \sum_{j \in \Z} (s_i)_{j-|a_i|}a_i = 0$. 
Each $\sum_{j \in \Z}(s_i)_{j-|a_i|}a_i$ is of a different degree, hence for the whole sum over $i$ to be zero we need each 
$\sum_{j \in \Z}(s_i)_{j-|a_i|}a_i = 0$. Each $\sum_{j \in \Z}(s_i)_{j-|a_i|}a_i$ is a homogeneous linear combination of the $a_i$'s, hence
each $(s_i)_{j-|a_i|} = 0$, which gives us that each $r_i =0$ as desired.
\end{proof}

\begin{rk}
 The above proposition allows us to proceed as usual and work with homogeneous
linear combinations where the `scalars' are homogeneous therefore when nonzero they are invertible. 
\end{rk}

\begin{proposition}\label{artin}
 Any finite graded algebra over a graded field is gr-artinian.
\end{proposition}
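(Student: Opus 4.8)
The plan is to reduce gr-artinianness to a finite-dimension count, exactly as in the ungraded case, where a finite-dimensional algebra over a field is artinian because the dimensions of a descending chain of ideals cannot strictly decrease forever. The whole argument rests on having a well-behaved notion of dimension for gr-modules over a gr-field, which is precisely what Proposition \ref{dim} supplies.

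First I would fix notation: let $\kf$ be the gr-field (a commutative gr-division ring) and let $A$ be finite over $\kf$, meaning $A$ is finitely generated as a gr-module over $\kf$. By Proposition \ref{dim}, $A$ then has a homogeneous basis and a well-defined dimension $n = \dim_\kf(A) < \infty$. The key step is a piece of graded linear algebra: every gr-ideal $I$ of $A$ is in particular a gr-submodule of $A$ over $\kf$, and I would show that such a submodule is itself finitely generated, with $\dim_\kf(I) \le n$, and that a strict inclusion of gr-submodules forces a strict drop in dimension. To get finite generation, choose a maximal homogeneously linearly independent subset of $I$; by the equivalence of linear independence and homogeneous linear independence established in the proof of Proposition \ref{dim}, any homogeneously independent set inside $A$ has at most $n$ elements, so such a maximal set exists and is finite. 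Maximality then forces it to span $I$ (any homogeneous element of $I$ not in the span would enlarge the independent set), giving a homogeneous basis and hence a well-defined $\dim_\kf(I) \le n$. For the strict-drop property, extend a homogeneous basis of a gr-submodule $I$ to a homogeneous basis of a larger gr-submodule $J$; if $I \neq J$ at least one additional basis element is required, so $\dim_\kf(I) < \dim_\kf(J)$.

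With this in hand the conclusion is immediate. Given any descending chain $I_1 \supseteq I_2 \supseteq I_3 \supseteq \cdots$ of gr-ideals of $A$, the associated sequence $\dim_\kf(I_1) \ge \dim_\kf(I_2) \ge \cdots$ is a non-increasing sequence of integers lying in $\{0, 1, \ldots, n\}$, hence eventually constant; at any index where the dimension stabilizes, the corresponding inclusion must be an equality by the strict-drop property. Therefore the chain stabilizes, and $A$ satisfies the descending chain condition on gr-ideals, i.e. $A$ is gr-artinian. I expect the main obstacle to be the graded linear-algebra step — establishing that gr-submodules of a finite gr-module over a gr-field are finitely generated with a dimension function that strictly drops under proper inclusion. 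Once homogeneous linear independence from Proposition \ref{dim} is available, this mirrors the ungraded theory closely; the only care needed is to keep every chosen element and every linear combination homogeneous, so that gr-division by the homogeneous scalars is legitimate.
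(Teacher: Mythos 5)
Your proposal is correct and follows essentially the same route as the paper: both reduce gr-artinianness to the finiteness of $\dim_R(A)$ supplied by Proposition \ref{dim} and observe that a descending chain of graded ideals yields a non-increasing, bounded sequence of dimensions that must stabilize. You actually supply more detail than the paper does — in particular the finite generation of gr-submodules and the strict-drop-under-proper-inclusion argument, which the paper leaves implicit.
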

\begin{proof}
Let $A$ be a finite graded algebra over the gr-field $R$. Then by \ref{dim} we have $\dim_R (A)$ is finite.
 Let $\cdots \subset I_2 \subset I_1 \subset A$ be a descending chain of ideals. Then by \ref{dim} we have a chain of inequalities
$$ \cdots \leq \dim_R I_2 \leq \dim_R I_1 \leq \dim_R A.$$ This way we get a decreasing chain of inequalities that cannot continue indeterminately hence 
the descending sequence of graded ideals stabilizes. 
\end{proof}

\begin{definition}
 A commutative gr-local ring $R$ is \textit{gr-henselian} if every commutative finite graded $R$-algebra is \textit{gr-decomposed}, 
that is, if it is the direct sum of gr-local rings.  
\end{definition}

\begin{rk}\label{gr-hen}
 By \cite[II.3.14]{CO} a gr-field is gr-hensenlian. This follows from \ref{artin}. 
\end{rk}

\begin{corollary}[From Remark \ref{gr-hen}]
Any commutative finite graded algebra over a gr-field is the direct sum of gr-local rings.
\end{corollary}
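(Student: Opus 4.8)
The plan is to unwind the definition of gr-henselian, since the statement is essentially a direct consequence of Remark \ref{gr-hen}. First I would observe that a gr-field $\kf$ is in particular a commutative gr-local ring: its unique homogeneous maximal ideal is the zero ideal $(0)$, because every nonzero homogeneous element of a gr-field is invertible, so there are no nonzero proper homogeneous ideals. This places a gr-field squarely within the class of rings to which the definition of gr-henselian applies, which is the only small verification the argument needs.

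Next I would invoke Remark \ref{gr-hen}, which (citing \cite[II.3.14]{CO}, and ultimately resting on Proposition \ref{artin}) tells us that a gr-field is gr-henselian. By the very definition of gr-henselian, a gr-henselian ring $R$ has the property that every commutative finite graded $R$-algebra is gr-decomposed, that is, a direct sum of gr-local rings. Specializing $R$ to the gr-field then yields exactly the assertion of the corollary, so the proof is a short chain of implications rather than a genuine computation.

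Consequently, there is no real obstacle here: the entire content lives in Remark \ref{gr-hen} and in Proposition \ref{artin}, both already established, and the corollary is recovered simply by noting that a gr-field is an instance of a gr-local ring and then reading off the conclusion from the definition. I would keep the written proof to a single sentence of this form, emphasizing that the finiteness hypothesis is what feeds into the gr-artinian behavior underlying the gr-henselian property.
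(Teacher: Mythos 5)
Your proof is correct and matches the paper's intent exactly: the corollary is stated as an immediate consequence of Remark \ref{gr-hen} (a gr-field is gr-henselian) together with the definition of gr-henselian, which is precisely the chain of implications you give. Your added observation that a gr-field is gr-local with maximal homogeneous ideal $(0)$ is a harmless and reasonable verification that the definition applies.
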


\begin{corollary} \label{hens} If $A$ is a commutative finite graded algebra over a field $\kf$, then $A = \prod_{i=1}^n A_i$, where each $A_i$ is a gr-local ring 
where $\kf_i$ are the residue gr-field for $A_i$ with gr-maximal ideal $\mathfrak{m}_i$. 
Moreover the homogeneous elements of the gr-maximal ideals $\mathfrak{m}_i$ are nilpotent. \cite[A.3]{W}
\end{corollary}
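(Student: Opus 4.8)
The plan is to assemble this corollary from the two results just established: the gr-henselianity of a gr-field (Remark \ref{gr-hen}) for the product decomposition, and the gr-artinianity of finite graded algebras (Proposition \ref{artin}) for the nilpotence clause. The decomposition itself is essentially immediate, so the only real content lies in the ``moreover'' statement.

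First I would note that an ungraded field $\kf$, viewed as concentrated in degree zero, is a gr-field, hence gr-henselian by \ref{gr-hen}. Applying the preceding corollary to the finite graded $\kf$-algebra $A$, we obtain that $A$ is gr-decomposed, i.e. a direct sum of gr-local rings; because $\dim_{\kf} A < \infty$ (and the orthogonal idempotents in a finite-dimensional algebra are finite in number) this sum has only finitely many summands, so $A = \prod_{i=1}^n A_i$ with each $A_i$ gr-local. Each $A_i$ is a direct factor of $A$, hence again a finite graded $\kf$-algebra, and by \ref{grlo} it carries a unique homogeneous maximal ideal $\mathfrak{m}_i$ with residue gr-field $\kf_i = A_i/\mathfrak{m}_i$.

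For the nilpotence clause I would fix an index $i$ and a homogeneous element $x \in \mathfrak{m}_i$, and consider the descending chain of graded ideals $(x) \supseteq (x^2) \supseteq (x^3) \supseteq \cdots$. Since $A_i$ is gr-artinian by \ref{artin}, this chain stabilizes, say $(x^n) = (x^{n+1})$. As $x^n$ is homogeneous of degree $n|x|$ and lies in $A_i x^{n+1}$, comparing homogeneous components lets me extract a homogeneous coefficient $a$ of degree $-|x|$ with $x^n = a x^{n+1}$, so that $x^n(1 - ax) = 0$.

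The main (and only delicate) point is the invertibility argument at the end, where the degree bookkeeping pays off: the element $ax$ is homogeneous of degree zero and lies in $\mathfrak{m}_i$, so $1 - ax$ is a homogeneous element whose class in the residue gr-field $A_i/\mathfrak{m}_i$ equals $1$, hence a unit in the gr-local ring $A_i$. Cancelling it forces $x^n = 0$. Since $\mathfrak{m}_i$ is generated by its homogeneous elements, each of which is thereby nilpotent, the claim follows. Alternatively, one could run the same stabilization on the chain $\mathfrak{m}_i \supseteq \mathfrak{m}_i^2 \supseteq \cdots$ and invoke graded Nakayama to conclude $\mathfrak{m}_i^n = 0$ directly, which yields nilpotence of all homogeneous elements at once.
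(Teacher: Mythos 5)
Your proposal is correct and follows the same route the paper takes: the paper derives the decomposition from the gr-henselianity of a gr-field (Remark \ref{gr-hen}, via Proposition \ref{artin}) and delegates the rest to the cited reference, whereas you supply the details explicitly. Your artinian chain argument for the nilpotence clause, with the degree-zero observation making $1-ax$ a homogeneous unit in the gr-local ring, is exactly the graded analogue of the standard argument the citation points to, and it is sound.
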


\section{Graded separable extensions}

\begin{definition}\label{gr-separable}
 Let $L$ be a finite graded field extension of the graded field $K$. Then $L$ is a \textit{graded separable extension}
of $K$ if, for each homogeneous element $l \in L$, the minimal homogeneous polynomial of $l$ over $K$ has distinct roots.
\end{definition}

\begin{definition}
A finite dimensional graded algebra $A$ is a \textit{gr-separable} algebra if it is the product of gr-separable field extensions. 
\end{definition}

\begin{rk}
Let $\kf$ be a field. A graded field extension of $\kf$ may be one of the following; $L = l$ where $l$ is a field extension in the usual sense, or $L = l[X, X^{-1}]$ where $l$ is a field extension in the usual sense. 
\end{rk}

\begin{rk}\label{piA}
Any gr-field of the form $L[T, T^{-1}]$ cannot be a gr-separable extension for $\kf$ since $T$ is transcendental over $\kf$, even if $L$ is a separable extension of $\kf$. 
\end{rk}


\bibliography{../resources/biblio}
\bibliographystyle{amsalpha}

\end{document}